\documentclass{amsart}
\usepackage[utf8]{inputenc}
\usepackage{amssymb,amsmath,amsthm,mathrsfs,layout,graphicx}
\setlength{\textwidth}{373pt}
\setlength{\oddsidemargin}{48pt}
\newtheorem{theorem}{Theorem}
\newtheorem{lemma}[theorem]{Lemma}

\newtheorem{proposition}[theorem]{Proposition}
\newtheorem{conjecture}[theorem]{Conjecture}
\newtheorem{example}[theorem]{Example}
\theoremstyle{definition}
\newtheorem{definition}[theorem]{Definition}
\newtheorem{remark}[theorem]{Remark}
\DeclareMathOperator{\Stab}{Stab}
\DeclareMathOperator{\Irr}{Irr}
\DeclareMathOperator{\Ind}{Ind}
\DeclareMathOperator{\Res}{Res}

\DeclareMathOperator{\GL}{GL}
\DeclareMathOperator{\tr}{tr}
\DeclareMathOperator{\Aut}{Aut}
\DeclareMathOperator{\charac}{char}
\newcommand{\oo}{\mathfrak{o}}

\title[Representations of Automorphism Groups of $\mathfrak{o}$-modules of type $(\ell,1^n)$]{The Representations of Automorphism Groups of $\mathfrak{o}$-modules of type $(\ell,1^n)$}
\author{Alexander Jackson}
\date{\today}

\begin{document}
\begin{abstract}
Let $\mathfrak{o}$ be the valuation ring of a non-Archimedean local field with finite residue field. We give a procedure to find the representation zeta polynomial of $\Aut_\mathfrak{o}(\mathfrak{o}_\ell\oplus\mathfrak{o}_1^{\oplus n})$ by induction on $n$. In particular, we show that the dimensions of the representations are given by evaluating finitely many polynomials at $q=|\mathfrak{o}_1|$.
\end{abstract}
\maketitle
\begin{figure}[b]
    \makebox[\textwidth][l]{
        \includegraphics[width=0.3\textwidth]{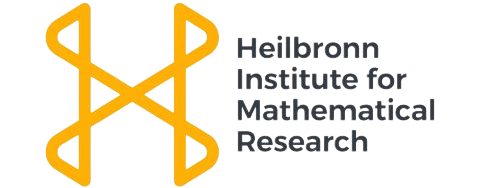}
        \includegraphics[width=0.15\textwidth]{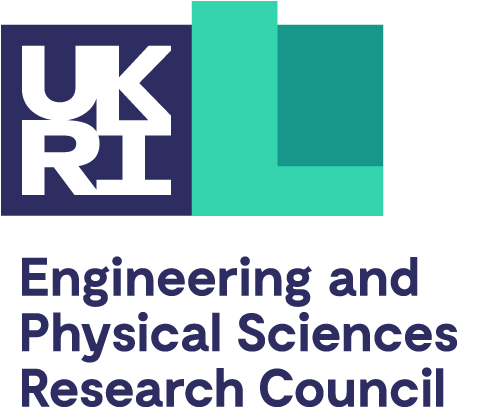}
    }
\end{figure}
\section{Introduction}
Let $\mathfrak{o}$ be the valuation ring of a non-Archimedean local field with maximal ideal~$\mathfrak{p}$ and finite residue field $\mathfrak{o}_1$. Let $n,r\geq 1$ be integers, and write $\mathfrak{o}_r=\mathfrak{o}/\mathfrak{p}^r$. Given a finite group $G$, denote by $\Irr(G)$ the set of irreducible complex representations of~$G$ up to isomorphism. The problem of constructing the representations of $\GL_n(\mathfrak{o}_r)$ has been studied in a series of papers by Hill \cite{hilljordan,hillnilpotent,hill95,hillsemisimplecuspidal}, in which a method is developed to construct certain classes of representations using Clifford theory relative to congruence subgroups of $\GL_n(\mathfrak{o}_r)$. For a detailed history of the problem, see \cite{stasinskisurvey}.
\par Define a \textbf{partition} of a natural number $n$ to be a decreasing sequence of natural numbers $(\lambda_1,\dots,\lambda_t)$ with $n=\sum\lambda_i$. Given any partition $\lambda=(\lambda_1,\dots,\lambda_t)$, Onn~\cite{onn08} defines $G_{\lambda,\mathfrak{o}}=\mathrm{Aut}_\mathfrak{o}(\mathfrak{o}_{\lambda_1}\oplus\dots\oplus\mathfrak{o}_{\lambda_t})$; note that when $\lambda_i=r$ for all $i$, then~$G_{\lambda,\mathfrak{o}}=\GL_n(\mathfrak{o}_r)$. Onn conjectures that the irreducible representations of $G_{\lambda,\mathfrak{o}}$ have dimensions which are polynomials in the residue cardinality $q$, and these occur with frequencies which are also polynomials in $q$. We suggest the following statement of Onn's conjecture, a version of which has already appeared in \cite{j24}:
\begin{conjecture}\label{Onn conjecture}
Let $\mathfrak{O}$ denote the set of rings which are the valuation ring of a non-Archimedean local field with finite residue field, up to isomorphism. Let $n\geq 1$ and a partition $\lambda$ of $n$ be given. There exist $k\geq 1$, polynomials \[d_1(x),\dots,d_k(x)\in\mathbb{Z}[x]\setminus\{0\},m_1(x),\dots,m_k(x)\in\mathbb{Q}[x]\setminus\{0\},\]
and for all $\mathfrak{o}\in\mathfrak{O}$ and $i\in\{1,\dots,k\}$, there exists $\mathcal{I}_{i,\mathfrak{o}}\subseteq\Irr(G_{\lambda,\mathfrak{o}})$
such that $\Irr(G_{\lambda,\mathfrak{o}})=\bigcup_{i=1}^k\mathcal{I}_{i,\mathfrak{o}}$ is a disjoint union,
and
\begin{enumerate}
\item $\text{for all }\rho\in\mathcal{I}_{i,\mathfrak{o}},\dim\rho=d_i(|\mathfrak{o}_1|)$, and
\item $|\mathcal{I}_{i,\mathfrak{o}}|=m_i(|\mathfrak{o}_1|)$.
\end{enumerate}
\end{conjecture}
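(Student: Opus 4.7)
I would prove the statement for $\lambda = (\ell, 1^n)$---the case treated in this paper---by induction on $n$, using Clifford theory relative to an abelian normal subgroup. Write $G_n := G_{(\ell,1^n),\mathfrak{o}}$ and $V := \mathfrak{o}_\ell \oplus \mathfrak{o}_1^n$. The base case $n=0$ is immediate: $G_0 = \mathfrak{o}_\ell^\times$ is abelian of order $(q-1)q^{\ell-1}$, so all irreducibles are one-dimensional and the conjecture holds with a single type, $d_1(x) = 1$ and $m_1(x) = (x-1)x^{\ell-1}$.

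For the inductive step, take $N := \{g \in G_n : g \equiv I \pmod{\mathfrak{p}^{\ell-1}}\}$, the top congruence subgroup. Then $N \triangleleft G_n$ and $N$ is abelian, because the nilpotent parts live in $\mathfrak{p}^{\ell-1}$ and $(\mathfrak{p}^{\ell-1})^2 \subseteq \mathfrak{p}^\ell = 0$. Fixing a continuous additive character $\theta : \mathfrak{o} \to \mathbb{C}^\times$ of conductor $\mathfrak{p}^\ell$, Pontryagin duality identifies $\Irr(N)$ with a dual $\mathfrak{o}_1$-module carrying a natural $G_n/N$-action. Clifford theory then places $\Irr(G_n)$ in bijection with pairs $(\psi,\sigma)$, where $\psi$ runs over $G_n$-orbit representatives in $\Irr(N)$ and $\sigma \in \Irr(\Stab_{G_n}(\psi))$ lies over $\psi$, with the dimension formula $\dim\rho = [G_n : \Stab_{G_n}(\psi)] \cdot \dim\sigma$.

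The remaining work divides into three parts: (i) classify the $G_n$-orbits in $\Irr(N)$ into finitely many \emph{types}, each with orbit size polynomial in $q$; (ii) identify each $\Stab_{G_n}(\psi)/N$ with a group of the form $G_{\mu,\mathfrak{o}}$ for a partition $\mu$ having strictly fewer $\mathfrak{o}_1$-factors than $(\ell, 1^n)$, so that the inductive hypothesis applies; and (iii) control the extensions of $\psi$ from $N$ up to $\Stab_{G_n}(\psi)$, typically via projective-character or semidirect-product arguments. Part (i) amounts to a finite linear-algebra classification over $\mathfrak{o}_1$ recording the rank invariants of $\psi$ relative to the block decomposition of $V$; together with (ii) and (iii), the polynomial orbit sizes combined with the polynomial dimensions and multiplicities from the inductive hypothesis yield the required classes $\mathcal{I}_{i,\mathfrak{o}}$.

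The main obstacle is step (ii): the stabilizers arising from the orbit analysis need not land in the family $\{G_{\mu,\mathfrak{o}}\}$ on the nose, and may be extensions, twisted variants, or semidirect products thereof. To close the induction one must either enlarge the inductive class to a broader family stable under the stabilizer operation (tracking the polynomial-in-$q$ behaviour along the way), or perform a secondary Clifford step inside each stabilizer. A further difficulty, for $\ell \geq 3$, is that $N$ alone may be insufficient and one must iterate over a longer congruence filtration, making the orbit bookkeeping more intricate but not altering the overall strategy.
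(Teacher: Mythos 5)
Your general framework --- Clifford theory relative to a normal subgroup of $G_{(\ell,1^n)}$ --- is the right tool, but the paper's decomposition is different from the one you propose, and the obstacle you flag at step (ii) is real and indeed forces one to leave the family $\{G_\mu\}$.

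The paper does not work with the abelian congruence kernel $N$ of reduction mod $\mathfrak p^{\ell-1}$, and it does not induct on $n$ at the top level. Instead it singles out the larger, non-abelian Heisenberg-type normal subgroup $H\lhd G_{(\ell,1^n)}$ (the kernel of the projection onto $\mathfrak o_{\ell-1}^\times\times\GL_n(\mathfrak o_1)$), whose centre $Z(H)\cong\mathfrak o_1$ lies in $Z(G)$. For each of the $q-1$ non-trivial characters $\chi$ of $Z(H)$, the commutator form on $H/Z(H)$ is non-degenerate, so Proposition~\ref{heisenberg lift} produces a unique $\rho_\chi\in\Irr(H)$ of dimension $q^n$ above $\chi$; an explicit maximal isotropic subgroup together with Lemma~\ref{extending linear characters} extends $\rho_\chi$ to $G$, whence Proposition~\ref{Clifford theory} gives $\Irr(G\mid\rho_\chi)\cong\Irr(G/H)=\Irr(\mathfrak o_{\ell-1}^\times\times\GL_n(\mathfrak o_1))$. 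This is what produces the term $q^{\ell-2}(q-1)^2\mathcal R_{\GL_n(\mathfrak o_1)}(\mathcal D^{q^n})$: all dependence on $\ell$ is absorbed once and for all into the character count of $\mathfrak o_{\ell-1}^\times$, with no recursion on $\ell$ or $n$. Only for the trivial character of $Z(H)$ does one pass to $G/Z(H)\cong\mathfrak o_{\ell-1}^\times\times((\mathfrak o_1^n\times\mathfrak o_1^n)\rtimes G_{(1^n)})$, and it is at this second stage that Clifford theory with an abelian normal subgroup (namely $\mathfrak o_1^n\times\mathfrak o_1^n$, via Proposition~\ref{Clifford theory abelian normal}) enters, roughly as you propose.

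This is exactly where your step (ii) breaks. The stabilizers appearing in Table~\ref{action} are $G_{(1^n)}$, $G_{(1^{n-1})}$, and the two mirabolic-type groups $P_n$ and $T_n$; the latter two are not of the form $G_\mu$. The paper therefore enlarges the class --- precisely the escape route you anticipated --- and computes $\mathcal R_{P_n}$ and $\mathcal R_{T_n}$ by their own recursions (Propositions~\ref{representations of P_n} and~\ref{type 4 stabiliser}), in the style of the unitriangular-group argument. Those recursions are in $n$, but they play an auxiliary role rather than organizing the whole proof.

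Two further issues with your set-up. With your $N$, the quotient is $G/N\cong G_{(\ell-1,1^n)}$, so the parameter that decreases is $\ell$, not $n$; an induction on $n$ anchored to this $N$ is not correctly aligned. Also, the extension $1\to N\to G\to G/N\to 1$ need not split, so Proposition~\ref{Clifford theory abelian normal} is not directly applicable; one is back to Proposition~\ref{Clifford theory} and must handle extendability of characters from $N$ to their stabilizers, which is non-trivial. The Heisenberg route finesses both points: $Z(H)$ is central so every character of it is $G$-invariant, the Heisenberg lift collapses $\Irr(H\mid\chi)$ to a singleton, and the extension of $\rho_\chi$ to $G$ is produced explicitly via the isotropic subgroup $H^{\mathrm{iso}}$.
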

This has been proven where $\lambda$ has length two by Onn \cite{onn08}, and for $\lambda=2^n$ by Singla~\cite{singla10}; $n\leq 4$. The statement for $\lambda=1^n$ is due to Green \cite{green}. In the present paper, we prove the statement of Conjecture \ref{Onn conjecture} where $\lambda=(\ell,1^n)$ and $\ell,n\geq 2$ are arbitrary.
\par In \cite{onn08}, Onn also conjectured:
\begin{conjecture}\label{Onn weak conjecture}
Let $\lambda$ be a partition and $\mathfrak{o},\mathfrak{o}'$ have equal residue cardinality $q$. Then
\[\mathbb{C}[G_{\lambda,\mathfrak{o}}]\cong\mathbb{C}[G_{\lambda,\mathfrak{o}'}],\]
or equivalently, a dimension-preserving bijection $\Irr(G_{\lambda,\oo})\leftrightarrow\Irr(G_{\lambda,\oo'})$.
\end{conjecture}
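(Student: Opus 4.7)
The plan is to obtain Conjecture~\ref{Onn weak conjecture} in the case $\lambda=(\ell,1^n)$ as a direct consequence of Conjecture~\ref{Onn conjecture} for this $\lambda$, which is the main theorem of the present paper. The key ingredient is Wedderburn's decomposition
\[\mathbb{C}[G]\cong\bigoplus_{\rho\in\Irr(G)}\mathrm{End}_\mathbb{C}(V_\rho),\]
which shows that the isomorphism type of $\mathbb{C}[G]$ is determined by the multiset of dimensions $\{\dim\rho:\rho\in\Irr(G)\}$, counted with multiplicity. This is exactly what makes the ``or equivalently'' clause in the statement correct, and it reduces the problem to producing, for each $q$, the same multiset of irreducible dimensions on the two sides.

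Applying Conjecture~\ref{Onn conjecture} to $\lambda=(\ell,1^n)$, there exist polynomials $d_1,\dots,d_k$ and $m_1,\dots,m_k$ such that for every $\oo\in\mathfrak{O}$ the set $\Irr(G_{\lambda,\oo})$ decomposes as a disjoint union $\bigcup_{i=1}^k\mathcal{I}_{i,\oo}$, with each $\mathcal{I}_{i,\oo}$ consisting of $m_i(|\oo_1|)$ representations of common dimension $d_i(|\oo_1|)$. If $\oo$ and $\oo'$ share residue cardinality $q$, then $|\mathcal{I}_{i,\oo}|=m_i(q)=|\mathcal{I}_{i,\oo'}|$ for each~$i$, so any choice of bijections $\mathcal{I}_{i,\oo}\leftrightarrow\mathcal{I}_{i,\oo'}$ combines to a dimension-preserving bijection $\Irr(G_{\lambda,\oo})\leftrightarrow\Irr(G_{\lambda,\oo'})$, and Wedderburn converts this to the required algebra isomorphism.

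For arbitrary~$\lambda$, the same argument reduces Conjecture~\ref{Onn weak conjecture} to the corresponding instance of Conjecture~\ref{Onn conjecture}, which remains open outside the partial cases cited in the introduction. The main obstacle to attacking Conjecture~\ref{Onn weak conjecture} directly, without first producing the polynomial count, is that $\oo$ and $\oo'$ may have different characteristics---consider $\mathbb{Z}_p$ versus $\FF_p[[t]]$---so even the finite quotients $\oo_r$ and $\oo'_r$ need not be isomorphic as rings. Consequently there is no natural homomorphism between $G_{\lambda,\oo}$ and $G_{\lambda,\oo'}$ to exploit, and any bijection between their irreducibles must be built at the level of characters, which is essentially why the strong form (Conjecture~\ref{Onn conjecture}) seems to be the right tool.
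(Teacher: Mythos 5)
Your argument is exactly the paper's: the authors observe in the introduction that since $\mathbb{C}[G]$ is determined (via Wedderburn) by the multiset of irreducible dimensions, Conjecture~\ref{Onn conjecture} implies Conjecture~\ref{Onn weak conjecture}, and Theorem~\ref{main theorem} then settles the case $\lambda=(\ell,1^n)$. Your discussion of why a direct attack (different characteristics, no ring homomorphism between $\oo_r$ and $\oo'_r$) is hard is a reasonable supplementary remark, and you correctly flag that for general $\lambda$ the statement remains a conjecture.
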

\par Since the group algebra of a finite group is determined by the dimensions of its irreducible representations, Conjecture \ref{Onn conjecture} implies Conjecture \ref{Onn weak conjecture}. In addition to the cases considered for Conjecture \ref{Onn conjecture}, Conjecture \ref{Onn weak conjecture} has been proven for $\lambda=2^n$ (for all $n$) by Singla \cite{singla10} and $\lambda=r^n$ (for all $r,n$ and $p=\charac\oo_1$ large) by Hadas \cite{hadas24}.
\par We describe the dimensions of the representations of the groups $G_{(\ell,1^n)}$. Define the \textbf{representation zeta polynomial} of a finite group $G$ to be the polynomial
\[\mathcal{R}_G(\mathcal{D})=\sum_{\rho\in\Irr(G)}\mathcal{D}^{\dim\rho}\in\mathbb{Z}[\mathcal{D}].\]
\begin{proposition}\label{representations of P_n}
The representation zeta polynomial of \[P_n=\begin{pmatrix}1&0&\dots&0\\ *&*&\dots&*\\\vdots&\vdots&\ddots&\vdots\\ *&*&\dots&*\end{pmatrix}\cong\mathfrak{o}_1^{n-1}\rtimes G_{(1^{n-1})}\] is given inductively by $\mathcal{R}_{P_1}(\mathcal{D})=\mathcal{D}$ and for $n\geq 2$,
\[\mathcal{R}_{P_n}(\mathcal{D})=\mathcal{R}_{P_{n-1}}(\mathcal{D}^{q^{n-1}-1})+\mathcal{R}_{\GL_{n-1}(\mathfrak{o}_1)}(\mathcal{D}).\]
\end{proposition}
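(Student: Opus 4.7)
The plan is to apply Clifford theory to the abelian normal subgroup $N$ of $P_n$ consisting of matrices of the form $I + v e_1^T$ with $v \in \mathfrak{o}_1^{n-1}$, so that $N \cong \mathfrak{o}_1^{n-1}$ and $P_n = N \rtimes H$ with $H = \GL_{n-1}(\mathfrak{o}_1)$ acting on $N$ by left multiplication on column vectors. Since $N$ is abelian, the semidirect-product form of Clifford theory tells us that every $\rho \in \Irr(P_n)$ has the form $\Ind_{N \rtimes H_\chi}^{P_n}(\tilde\chi \otimes \tilde\sigma)$, where $\chi$ runs over a set of representatives of $H$-orbits in $\hat N$, $H_\chi = \Stab_H(\chi)$, $\sigma \in \Irr(H_\chi)$, and $\tilde\chi, \tilde\sigma$ denote the canonical extension of $\chi$ and the inflation of $\sigma$ to $N \rtimes H_\chi$; moreover $\dim \rho = [H : H_\chi] \cdot \dim \sigma$. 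The base case $n=1$ is immediate from $P_1 = \{1\}$.

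For $n \geq 2$, I would fix a nontrivial additive character of $\mathfrak{o}_1$ to identify $\hat N$ with $\mathfrak{o}_1^{n-1}$; the induced $H$-action on $\hat N$ is then the transpose-inverse of the action on column vectors, and has exactly two orbits, namely $\{0\}$ of size $1$ and $\mathfrak{o}_1^{n-1} \setminus \{0\}$ of size $q^{n-1} - 1$. The trivial orbit yields representations inflated from $H = P_n / N$, contributing the summand $\mathcal R_{\GL_{n-1}(\mathfrak{o}_1)}(\mathcal D)$. For the nontrivial orbit I would take $\chi$ paired to $e_1$ and verify by a direct matrix computation that $H_\chi$ consists precisely of the elements of $\GL_{n-1}(\mathfrak{o}_1)$ whose first row is $(1, 0, \ldots, 0)$, i.e.\ $H_\chi = P_{n-1}$. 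Each $\sigma \in \Irr(P_{n-1})$ then produces an irreducible representation of $P_n$ of dimension $(q^{n-1}-1)\dim \sigma$, contributing $\mathcal R_{P_{n-1}}(\mathcal D^{q^{n-1}-1})$. Adding the two contributions gives the recursion.

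The main point requiring care is the identification $H_\chi \cong P_{n-1}$, because the dual action of $H$ on $\hat N$ is the transpose-inverse of the natural one; the stabilizer of $\chi$ is therefore the ``first row equals $e_1^T$'' parabolic rather than the ``first column equals $e_1$'' parabolic, and only the former matches the group $P_{n-1}$ as defined in the statement on the nose. Once that matching is set up, the remaining ingredients — transitivity of $H$ on nonzero vectors, the Clifford parametrization, and the dimension formula — are routine.
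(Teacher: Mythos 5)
Your proposal is correct and follows essentially the same argument as the paper: decompose $P_n = N \rtimes H$ with $N \cong \mathfrak{o}_1^{n-1}$ abelian and $H \cong \GL_{n-1}(\mathfrak{o}_1)$, apply Proposition~\ref{Clifford theory abelian normal} to the two $H$-orbits on $\hat N$, and identify the stabiliser of a nontrivial character as the ``first row equals $e_1^T$'' subgroup $P_{n-1}$. The care you take with the transpose in the dual action matches the paper's explicit computation of the stabiliser equations.
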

\begin{proposition}\label{type 4 stabiliser}
The representation zeta polynomial of
\[T_n=\begin{pmatrix}1&*&*&\dots&*\\0&1&0&\dots&0\\0&*&*&\dots&*\\\vdots&\vdots&\vdots&\ddots&\vdots\\0&*&*&\dots&*\end{pmatrix}\cong\mathfrak{o}_1^{n-1}\rtimes(\mathfrak{o}_1^{n-2}\rtimes G_{(1^{n-2})})\]
is given inductively by $\mathcal{R}_{T_2}(\mathcal{D})=q\mathcal{D}$
and for $n\geq 3$,
\begin{align*}\mathcal{R}_{T_n}(\mathcal{D})&=(q-1)\mathcal{R}_{\GL_{n-2}(\mathfrak{o}_1)}(\mathcal{D}^{q^{n-2}})+\mathcal{R}_{T_{n-1}}(\mathcal{D}^{q^{n-2}-1})+\mathcal{R}_{P_{n-1}}(\mathcal{D}).\end{align*}
\end{proposition}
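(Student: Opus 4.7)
The plan is to mirror the Clifford-theoretic argument used for $P_n$, applied to the semidirect product $T_n \cong N \rtimes P_{n-1}$ in which $N$ is the abelian subgroup of matrices $I + e_1 v^T$ with $v_1 = 0$ (isomorphic to $\mathfrak{o}_1^{n-1}$ via $(v_2, \dots, v_n)$) and $P_{n-1}$ embeds as the block-diagonal subgroup $\{g \in T_n : e_1^T g = e_1^T\}$. Since each $g \in T_n$ fixes $e_1$, a direct computation gives $g(I + e_1 v^T) g^{-1} = I + e_1 (g^{-T} v)^T$, so the action of $P_{n-1}$ on $N$ is $v \mapsto (B')^{-T} v$. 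After dualising through a fixed nontrivial character $\psi : \mathfrak{o}_1 \to \mathbb{C}^\times$ by $\chi_w(v) = \psi(w^T v)$, the action on $\hat N \cong \mathfrak{o}_1^{n-1}$ becomes the standard left multiplication $w \mapsto B' w$.

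Since $N$ is abelian, the Wigner--Mackey little-group construction yields
\[\mathcal{R}_{T_n}(\mathcal{D}) = \sum_{[w] \in \hat N / P_{n-1}} \mathcal{R}_{H_w}\bigl(\mathcal{D}^{[P_{n-1} : H_w]}\bigr),\]
where $H_w = \Stab_{P_{n-1}}(w)$. Parametrising $B' = \bigl(\begin{smallmatrix} 1 & 0 \\ c & B'' \end{smallmatrix}\bigr)$ with $c \in \mathfrak{o}_1^{n-2}$, $B'' \in \GL_{n-2}(\mathfrak{o}_1)$, and $w = (w_1, w'')$, one has $B' w = (w_1, c w_1 + B'' w'')$, so $w_1$ is $P_{n-1}$-invariant and three orbit types appear: the singleton $\{0\}$ with stabiliser $P_{n-1}$; the orbit $\{0\} \times (\mathfrak{o}_1^{n-2} \setminus \{0\})$ of size $q^{n-2} - 1$ (because $\GL_{n-2}(\mathfrak{o}_1)$ is transitive on nonzero vectors); and, for each of the $q - 1$ nonzero values of $w_1$, the orbit $\{w_1\} \times \mathfrak{o}_1^{n-2}$ of size $q^{n-2}$ (the map $c \mapsto c w_1$ being bijective on $\mathfrak{o}_1^{n-2}$). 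The stabiliser in the last case, computed at $(w_1, 0)$, is the set where $c = 0$, and is isomorphic to $\GL_{n-2}(\mathfrak{o}_1)$.

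The step requiring the most care is identifying the stabiliser in the middle family with $T_{n-1}$. At the representative $(0, e_1)$ it consists of matrices $\bigl(\begin{smallmatrix} 1 & 0 \\ c & B'' \end{smallmatrix}\bigr)$ with $B'' e_1 = e_1$, i.e.\ the first column of $B''$ is $e_1$. Applying the group automorphism $g \mapsto g^{-T}$ of $\GL_{n-1}(\mathfrak{o}_1)$ converts this condition into the requirement that row $2$ of $g^{-T}$ equal $(0, 1, 0, \dots, 0)$, while the remaining parameters of $g$ transform into the free entries appearing in the definition of $T_{n-1}$; a direct block computation confirms that the image is exactly $T_{n-1}$, so the stabiliser is isomorphic to $T_{n-1}$ and contributes $\mathcal{R}_{T_{n-1}}(\mathcal{D}^{q^{n-2} - 1})$. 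Combining this with $\mathcal{R}_{P_{n-1}}(\mathcal{D})$ from the zero orbit and $(q - 1) \mathcal{R}_{\GL_{n-2}(\mathfrak{o}_1)}(\mathcal{D}^{q^{n-2}})$ from the last family gives the stated recursion, the base case $\mathcal{R}_{T_2}(\mathcal{D}) = q \mathcal{D}$ being immediate from $T_2 \cong \mathfrak{o}_1$.
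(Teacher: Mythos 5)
Your proof is correct and follows essentially the same route as the paper: decomposing $T_n = N \rtimes P_{n-1}$, computing the dual action of $P_{n-1}$ on $\Irr(N) \cong \mathfrak{o}_1^{n-1}$, and classifying the three orbit types with their stabilisers. The only cosmetic difference is that you identify the middle-orbit stabiliser with $T_{n-1}$ via the automorphism $g \mapsto g^{-T}$, whereas the paper conjugates by the permutation matrix for the transposition $(2,3)$; both yield the required isomorphism.
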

The proofs of Propositions \ref{representations of P_n} and \ref{type 4 stabiliser} use similar arguments as in the proof \linebreak of {\cite[Theorem 26.9]{huppert98}}, which shows that the irreducible representations of the unitriangular groups $U_n(\mathfrak{o}_1)$ have dimensions which are powers of $q$. 
\par Knowing $\mathcal{R}_{P_n}(\mathcal{D})$ and $\mathcal{R}_{T_n}(\mathcal{D})$, one can calculate $\mathcal{R}_{G_{(\ell,1^n)}}(\mathcal{D})$ as follows:
\begin{theorem}\label{main theorem}
The representation zeta polynomial of $G_{(\ell,1^n)}$ is given by
\begin{align*}\mathcal{R}_{G_{(\ell,1^n)}}(\mathcal{D})&=q^{\ell-2}(q-1)^2\mathcal{R}_{\GL_n(\mathfrak{o}_1)}(\mathcal{D}^{q^n})+q^{\ell-2}(q-1)\mathcal{R}_{((\mathfrak{o}_1^n\times\mathfrak{o}_1^n)\rtimes G_{(1^n)})}(\mathcal{D}),\end{align*}
where
\begin{align*}\mathcal{R}_{(\mathfrak{o}_1^n\times\mathfrak{o}_1^n)\rtimes G_{(1^n)}}(\mathcal{D})&=\mathcal{R}_{G_{(1^n)}}(\mathcal{D})+2\mathcal{R}_{P_n}(\mathcal{D}^{q^n-1})+\mathcal{R}_{T_n}(\mathcal{D}^{(q^n-1)(q^{n-1}-1)})\\&\quad+(q-1)\mathcal{R}_{G_{(1^{n-1})}}(\mathcal{D}^{q^{n-1}(q^n-1)}).\end{align*}
In particular, Conjecture \ref{Onn conjecture} holds for $\lambda=(\ell,1^n)$.
\end{theorem}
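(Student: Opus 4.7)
The plan is to apply Clifford theory to $G := G_{(\ell,1^n)}$ relative to the central subgroup
\[Z = \left\{\begin{pmatrix} 1 + \mathfrak{p}^{\ell-1}\oo_\ell & 0 \\ 0 & I_n\end{pmatrix}\right\} \cong \oo_1,\]
central in $G$ because $\mathfrak{p}^{\ell-1}\oo_\ell$ is $\mathfrak{p}$-torsion. The set $\Irr(G)$ partitions by the central character on $Z$; the trivial character will produce the second summand of the claimed formula, and the $q-1$ non-trivial characters will produce the first.

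For the trivial character case, the corresponding irreducibles factor through $G/Z$, and I plan to prove directly that $G/Z \cong \oo_{\ell-1}^\times \times H$, where $H = (\oo_1^n \times \oo_1^n) \rtimes G_{(1^n)}$. Writing elements of $G$ as tuples $(a,b,c,D)$ and expanding the block multiplication, the mixed cross-terms $bc$ (in the $(1,1)$-block) and $cb$ (in the $(2,2)$-block) both lie in $\mathfrak{p}^{\ell-1}$ and so vanish in $G/Z$. The change of variables $(\bar a,b,c,D) \mapsto (\bar a, (\bar{\bar a}^{-1}b, \bar{\bar a}^{-1}c, \bar{\bar a}^{-1}D))$, where $\bar{\bar a} \in \oo_1^\times$ is the further reduction of $\bar a \in \oo_{\ell-1}^\times$, then exhibits the isomorphism. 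Since $\oo_{\ell-1}^\times$ is abelian of order $q^{\ell-2}(q-1)$, this contributes $q^{\ell-2}(q-1)\mathcal{R}_H(\mathcal{D})$.

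For the non-trivial characters, I will work with the Heisenberg-type normal subgroup $U = \{(a,b,c,I_n) : a \in 1+\mathfrak{p}^{\ell-1}\oo_\ell\} \trianglelefteq G$ of order $q^{2n+1}$, with centre $Z$, abelianisation $U/Z \cong \oo_1^{2n}$ carrying the symplectic form $\omega((b_1,c_1),(b_2,c_2)) = b_1 c_2 - b_2 c_1$, and quotient $G/U \cong \oo_{\ell-1}^\times \times \GL_n(\oo_1)$. For each of the $q-1$ non-trivial $\psi \in \hat Z$, Stone--von Neumann produces a unique $\rho_\psi \in \Irr(U)$ of dimension $q^n$, realised as $\Ind_{U_0}^U \tilde\psi$ from the Lagrangian $U_0 = Z \cdot \{(1,b,0,I_n)\}$. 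The crucial observation, which sidesteps any Weil/metaplectic obstruction to extending $\rho_\psi$, is that $U_0$ is normal in the larger subgroup $G_0 = \{(a,b,0,D)\} \leq G$ of elements with $c = 0$, and $G = UG_0$ with $U \cap G_0 = U_0$. Fixing any character of $\oo_\ell^\times$ extending $\psi$, inflating to $\hat\psi$ on $G_0$, and setting $\widetilde{\rho_\psi} := \Ind_{G_0}^G \hat\psi$ produces, by the single-double-coset Mackey formula, an irreducible extension of $\rho_\psi$ to $G$ of dimension $q^n$. Gallagher's theorem then identifies $\Irr(G | \rho_\psi)$ with $\Irr(G/U)$ via $\pi \mapsto \widetilde{\rho_\psi} \otimes \mathrm{Inf}(\pi)$, each of dimension $q^n \dim\pi$; summing over $\psi$ gives $(q-1)\mathcal{R}_{G/U}(\mathcal{D}^{q^n}) = q^{\ell-2}(q-1)^2 \mathcal{R}_{\GL_n(\oo_1)}(\mathcal{D}^{q^n})$.

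Finally, $\mathcal{R}_H$ follows from Clifford theory on $H = N \rtimes G_{(1^n)}$, with $N = \oo_1^n \times \oo_1^n$ abelian and normal. The orbits of $\GL_n(\oo_1)$ on $\hat N$ are classified by the pairing $xy \in \oo_1$ and the vanishing pattern of $(x,y)$: a zero orbit with stabiliser $G_{(1^n)}$; two orbits $(x,0)$ and $(0,y)$ of size $q^n-1$, each with stabiliser conjugate to $P_n$; one orbit with $x,y \neq 0$ and $xy=0$ of size $(q^n-1)(q^{n-1}-1)$, with stabiliser conjugate to $T_n$; and $q-1$ regular orbits indexed by $xy = t \in \oo_1^\times$, each of size $q^{n-1}(q^n-1)$ with stabiliser $G_{(1^{n-1})}$. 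Since $H$ splits, each character of $N$ extends trivially to its stabiliser, and the Clifford sum produces the stated four-term expression. The delicate point throughout will be constructing the irreducible extension $\widetilde{\rho_\psi}$; everything else reduces to careful block-multiplication bookkeeping that uses the hypothesis $\ell \geq 2$.
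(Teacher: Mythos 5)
Your proposal follows essentially the same route as the paper's proof: split $\Irr(G_{(\ell,1^n)})$ according to the central character on the copy of $\oo_1$ in the top-left block, handle the $q-1$ non-trivial characters via the Heisenberg subgroup and Stone--von Neumann (obtaining a $q^n$-dimensional extension to $G$ and applying Gallagher/Clifford to get $(q-1)\mathcal{R}_{\oo_{\ell-1}^\times\times\GL_n(\oo_1)}(\mathcal{D}^{q^n})$), handle the trivial character by identifying $G/Z$ with $\oo_{\ell-1}^\times\times((\oo_1^n\times\oo_1^n)\rtimes G_{(1^n)})$, and then compute that last factor's zeta polynomial by the orbit/stabiliser table for $G_{(1^n)}$ acting on $\oo_1^n\times\oo_1^n$. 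The only cosmetic difference is in how the extension $\widetilde{\rho_\psi}$ is built: you induce a linear character directly from the "upper-triangular" subgroup $G_0$ and verify irreducibility with Mackey, whereas the paper induces the same sort of linear character from the mirror-image lower-triangular subgroup $G^{\mathrm{iso}}$ after invoking Lemma \ref{extending linear characters}; these are the same construction in different clothing.
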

\par The representation zeta polynomial of $G_{(2,1,1)}$ was already known by Singla \cite{singla10}. In fact, if we can show that the dimensions and frequencies of representations of $C_{\GL_n(\mathfrak{o}_1)}(\beta)$ are given by polynomials in $q$ as in Conjecture \ref{Onn conjecture}, then the same is true for $\GL_n(\mathfrak{o}_2)$ for all $n$ (see \cite[\S 7.1]{singla10}).
\par The structure of the paper is as follows: in Section \ref{preliminaries}, we give a brief review of Clifford theory and the representation zeta polynomial. In Section \ref{application}, we explain how Theorem \ref{main theorem} relates to $\GL_n(\mathfrak{o}_r)$ and give proofs of Propositions \ref{representations of P_n} and \ref{type 4 stabiliser}. In Section \ref{arbitrary characteristic}, we derive the representation zeta polynomial of $G_{(\ell,1^n),\mathfrak{o}}$, thus proving Theorem \ref{main theorem}.
\section{Preliminaries and Clifford Theory}\label{preliminaries}
We outline the approach of Clifford theory introduced in \cite{clifford37}. Let $G$ be a finite group and $N$ a normal subgroup of $G$. Here, $G$ acts on $\Irr(N)$ as follows: given $g\in G,\psi\in\Irr(N)$, define $^g\psi\in\Irr(N)$ by
\[^g\psi(x)=\psi(g^{-1}xg)\]
for $x\in N$. We also define \[\Irr(G\mid\psi)=\{\rho\in\Irr(G)\mid\langle\psi,\rho|_N\rangle\neq 0\}.\]
\begin{definition}
Let $G$ be a finite group, $N$ a normal subgroup of $G$, and $\psi\in\Irr(N)$.
The \textbf{representation zeta polynomial of $G$ above $\psi$} is
\[\mathcal{R}_{G\mid\psi}(\mathcal{D})=\sum_{\rho\in\Irr(G\mid\psi)}\mathcal{D}^{\dim\rho}.\]
\end{definition}
For example, the representation zeta polynomial of $\GL_2(\mathfrak{o}_1)$ is:
\[\mathcal{R}_{\GL_2(\mathfrak{o}_1)}(\mathcal{D})=(q-1)\mathcal{D}+\frac 12 q(q-1)\mathcal{D}^{q-1}+(q-1)\mathcal{D}^q+\frac 12(q-1)(q-2)\mathcal{D}^{q+1}.\]
Defining $B$ to be the subgroup of upper triangular matrices in $\GL_2(\mathfrak{o}_1)$, there are two representations (the trivial representation of dimension one and the Steinberg representation of dimension $q$) whose restriction to $B$ contains the trivial representation of $B$:
\[\mathcal{R}_{\GL_2(\mathfrak{o}_1)\mid 1_B}(\mathcal{D})=\mathcal{D}+\mathcal{D}^q.\]
The following result is standard in Clifford theory; for proofs, see \cite[Theorems 6.2, 6.7, 11.22]{isaacs}:
\begin{proposition}\label{Clifford theory}
Let $G$ be a finite group and $N$ a normal subgroup of $G$.
For every $\rho\in\Irr(G)$, there exists a $G$-orbit $\Omega\subseteq\Irr(N)$ and an integer $e\geq 1$ such that
\[\Res^G_N\rho=e\bigoplus_{\psi\in\Omega}\psi.\]
Also, for every $\psi\in\Irr(N)$, there is a bijection
\[\Irr(\Stab_G(\psi)\mid\psi)\to\Irr(G\mid\psi)\]
given by induction, $\theta\mapsto\Ind^G_{\Stab_G(\psi)}\theta$.\\
If, further, $\psi$ extends to a representation $\hat{\psi}\in\Irr(\Stab_G(\psi))$, then there is a bijection
\[\Irr(\Stab_G(\psi)/N)\to\Irr(\Stab_G(\psi)\mid\psi)\]
given by $\overline{\theta}\mapsto\theta\otimes\hat{\psi}$, where $\theta$ is obtained from $\overline{\theta}$ by composition with the natural map~$\Stab_G(\psi)\to\Stab_G(\psi)/N$.
\end{proposition}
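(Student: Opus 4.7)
The plan is to establish the three assertions in order, each building on the previous. Throughout, let $N\trianglelefteq G$ be as in the statement, and for $\psi\in\Irr(N)$ write $T=\Stab_G(\psi)$.

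First I would prove Part 1, the decomposition of $\Res^G_N\rho$. Given $\rho\in\Irr(G)$ acting on a vector space $V$, decompose $\Res^G_N\rho=\bigoplus_{\psi\in\Irr(N)}V_\psi$ into $N$-isotypic components. A direct calculation shows that for any $g\in G$ the subspace $gV_\psi$ equals $V_{{}^g\psi}$: if $v\in V_\psi$ and $x\in N$, then $x\cdot gv=g(g^{-1}xg)v$, and since $g^{-1}xg\in N$ acts on $v$ via $\psi$, the vector $gv$ transforms according to ${}^g\psi$. Thus $G$ permutes the isotypic components compatibly with its action on $\Irr(N)$. For any $G$-orbit $\Omega\subseteq\Irr(N)$, the sum $\bigoplus_{\psi\in\Omega}V_\psi$ is $G$-stable, so equals $V$ by irreducibility of $\rho$; only one orbit $\Omega$ contributes, and the multiplicities on $\Omega$ are constant because $g$ maps $V_\psi$ isomorphically onto $V_{{}^g\psi}$.

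Next I would establish the Clifford correspondence of Part 2. For $\rho\in\Irr(G\mid\psi)$, set $W=V_\psi$. By the preceding paragraph $W$ is $T$-stable (since $T$ fixes $\psi$), so it carries a $T$-representation $\sigma$; Frobenius reciprocity together with the $N$-isotypic description then yields $\rho\cong\Ind^G_T\sigma$, and from this one deduces that $\sigma$ is irreducible and lies above $\psi$. Conversely, given $\theta\in\Irr(T\mid\psi)$, the crux is to verify that $\Ind^G_T\theta$ is irreducible. I would apply Mackey's irreducibility criterion: one must check that for every $g\in G\setminus T$, the restrictions of $\theta$ and ${}^g\theta$ to $T\cap gTg^{-1}$ are disjoint. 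But $\theta|_N$ is a multiple of $\psi$ while $({}^g\theta)|_N$ is a multiple of ${}^g\psi\neq\psi$, so these restrictions are already disjoint as $N$-representations, and $N\leq T\cap gTg^{-1}$. A short verification then shows that $\rho\mapsto\sigma$ and $\theta\mapsto\Ind^G_T\theta$ are mutually inverse.

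Finally, for Part 3 I would observe that if $\psi$ extends to $\hat\psi\in\Irr(T)$, then for any $\theta\in\Irr(T\mid\psi)$ the tensor product $\theta\otimes\hat\psi^{*}$ restricts to $N$ as a multiple of the trivial representation; it therefore factors through $T/N$ as some $\eta\in\Irr(T/N)$, giving $\theta\cong\eta\otimes\hat\psi$. This inverts the assignment $\bar\theta\mapsto\theta\otimes\hat\psi$ and establishes the bijection, with irreducibility transferred in both directions via the tensor product with the one-dimensional-on-each-$N$-isotypic-piece $\hat\psi$. I expect the main obstacle to be Part 2, and specifically the use of Mackey's criterion to prove irreducibility of $\Ind^G_T\theta$; once that point is granted, the rest of the argument reduces to careful bookkeeping with isotypic components and tensor products.
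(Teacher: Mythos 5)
Your Parts 1 and 2 are correct and follow the standard route: the isotypic decomposition of $\Res^G_N\rho$ for Clifford's theorem, and Mackey's irreducibility criterion plus Frobenius reciprocity for the Clifford correspondence. Note that the paper does not actually prove this proposition --- it simply cites Isaacs, Theorems 6.2, 6.7 and 11.22 --- so you are supplying an argument where the paper supplies a reference.

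There is, however, a genuine gap in Part 3. You assert that $\theta\otimes\hat\psi^{*}$ restricts to $N$ as a multiple of the trivial representation and hence factors through $T/N$. That is true only when $\dim\psi=1$. In general, with $\theta|_N=e\psi$ and $\hat\psi|_N=\psi$, one has $(\theta\otimes\hat\psi^{*})|_N=e\,(\psi\otimes\psi^{*})$, which contains the trivial character of $N$ with multiplicity $e$ but is not a multiple of it once $\dim\psi>1$; so $\theta\otimes\hat\psi^{*}$ does not descend to $T/N$, and the proposed inverse map breaks down. Your closing phrase about a ``one-dimensional-on-each-$N$-isotypic-piece $\hat\psi$'' suggests you had the linear case in mind, but the proposition is stated for arbitrary $\psi$. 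What you are proving is Gallagher's theorem (Isaacs, Corollary 6.17), and the correct inverse is not $\theta\otimes\hat\psi^{*}$ but the multiplicity space $\mathrm{Hom}_N(\hat\psi|_N,\theta|_N)$: this is a $T$-module (via $t\cdot f=\theta(t)\circ f\circ\hat\psi(t)^{-1}$) on which $N$ acts trivially, hence a $T/N$-module $\bar\theta$ of dimension $e$, and the evaluation map $\hat\psi\otimes\bar\theta\to\theta$ is a $T$-isomorphism; irreducibility on both sides then follows by a short constituent count. In fairness, every application of Part 3 in this paper has $\dim\psi=1$ (the normal subgroups $K^1$, $\tilde{N}_n$, $N_n$ and $Z(H)$ are all abelian), so your argument would suffice for the paper's purposes, but it does not establish the proposition as stated.
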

\begin{remark}\label{sum of zeta polynomials above representatives}
By Proposition \ref{Clifford theory}, if $X$ is any set of representatives of the $G$-orbits on $\Irr(N)$, then
\[\mathcal{R}_G(\mathcal{D})=\sum_{\psi\in X}\mathcal{R}_{G\mid\psi}(\mathcal{D}).\]
Also by Proposition \ref{Clifford theory}, \[\mathcal{R}_{G\mid\psi}(\mathcal{D})=\mathcal{R}_{\Stab_G(\psi)\mid\psi}(\mathcal{D}^{[G:\Stab_G(\psi)]}).\]
\par We shall also need to describe the representations of a semidirect product where the normal subgroup is abelian (this is Proposition 25 of \cite{serre71}):
\begin{proposition}\label{Clifford theory abelian normal}
Let $G=N\rtimes H$, where $N$ is abelian. Let $\chi_i$ be representatives of the orbits of $H$ on $\Irr(N)$. For $\rho\in\mathrm{Irr}(\mathrm{Stab}_H(\chi_i))$, define~${\tilde{\rho}\in\mathrm{Irr}(N\rtimes\mathrm{Stab}_H(\chi_i))}$ by composition with the projection \[N\rtimes\mathrm{Stab}_H(\chi_i)\twoheadrightarrow \mathrm{Stab}_H(\chi_i).\] Define $\tilde{\chi_i}\in\Irr(N\rtimes\mathrm{Stab}_H(\chi_i))$ by ${\tilde{\chi_i}(nh)=\chi_i(n)}$. The irreducible representations of $G$ are precisely
\[\mathrm{Ind}_{N\rtimes\mathrm{Stab}_H(\chi_i)}^G(\tilde{\chi_i}\otimes\tilde{\rho}),\]
and further, these are all distinct for distinct choices of pairs $(i,\rho)$.
\end{proposition}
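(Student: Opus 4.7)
The plan is to specialise Proposition \ref{Clifford theory} to the case at hand. Since $N$ is abelian and normal in $G = N \rtimes H$, the elements of $N$ act trivially on $\Irr(N)$, so that the $G$-action on $\Irr(N)$ factors through $H$ and $\Stab_G(\chi_i) = N \rtimes \Stab_H(\chi_i)$. In particular the $G$-orbits on $\Irr(N)$ coincide with the $H$-orbits, and $\Irr(G)$ decomposes as the disjoint union $\bigsqcup_i \Irr(G \mid \chi_i)$ over orbit representatives by the first part of Proposition \ref{Clifford theory}.

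Next I would verify that $\tilde{\chi_i}(nh) := \chi_i(n)$ defines a one-dimensional representation of $\Stab_G(\chi_i)$ extending $\chi_i$. This is a direct computation: for $n_1 h_1, n_2 h_2 \in N \rtimes \Stab_H(\chi_i)$,
\[\tilde{\chi_i}(n_1 h_1 \cdot n_2 h_2) = \tilde{\chi_i}(n_1 (h_1 n_2 h_1^{-1}) h_1 h_2) = \chi_i(n_1)\chi_i(h_1 n_2 h_1^{-1}),\]
and the stabiliser condition ${}^{h_1}\chi_i = \chi_i$ is exactly $\chi_i(h_1 n_2 h_1^{-1}) = \chi_i(n_2)$, so this equals $\tilde{\chi_i}(n_1 h_1)\tilde{\chi_i}(n_2 h_2)$.

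With the extension $\tilde{\chi_i}$ in hand, the third part of Proposition \ref{Clifford theory} provides a bijection $\Irr(\Stab_G(\chi_i)/N) \to \Irr(\Stab_G(\chi_i) \mid \chi_i)$ sending $\bar{\theta} \mapsto \theta \otimes \tilde{\chi_i}$. Composing with the natural isomorphism $\Stab_G(\chi_i)/N \cong \Stab_H(\chi_i)$, this rewrites as $\rho \mapsto \tilde{\rho} \otimes \tilde{\chi_i}$ in the notation of the statement. Applying the second (induction) bijection of Proposition \ref{Clifford theory} then gives $\Irr(G \mid \chi_i)$, producing exactly the listed representations.

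Distinctness is essentially automatic: pairs $(i,\rho)$ and $(j,\rho')$ with $i \neq j$ yield representations lying over different $H$-orbits in $\Irr(N)$, so by the first part of Proposition \ref{Clifford theory} they cannot be isomorphic; for $i = j$, distinctness follows from injectivity of the two bijections used. I do not expect any serious obstacle here — the content is simply an unpacking of Proposition \ref{Clifford theory} in the abelian-normal-subgroup setting, with the only real verification being that the formula $\tilde{\chi_i}(nh) = \chi_i(n)$ is multiplicative on the stabiliser.
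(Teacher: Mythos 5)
Your proof is correct and follows precisely the route the paper indicates: the paper cites Serre's Proposition 25 without reproducing the proof, but immediately remarks that it is a special case of Proposition \ref{Clifford theory} with $\Stab_G(\chi_i)=N\rtimes\Stab_H(\chi_i)$, and your argument simply fleshes out that remark (the one nontrivial check, multiplicativity of $\tilde{\chi_i}$, is done correctly).
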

Note that this is a special case of Proposition \ref{Clifford theory}, where the stabiliser also splits as a semidirect product, $\Stab_G(\chi_i)=N\rtimes\Stab_H(\chi_i)$. In order to find $\mathcal{R}_G(\mathcal{D})$, it is sufficient to find representatives of the $H$-orbits on $\Irr(N)$ and the representation zeta polynomial of each of the stabilisers of the orbit representatives.
\end{remark}
\section{Application to $\GL_n(\mathfrak{o}_r)$}\label{application}
We return to considering the representations of $\GL_n(\mathfrak{o}_r)$. We shall only consider the case $r=2$, which is the situation studied by Singla \cite{singla10}. Define the (abelian) congruence subgroup \[K^1=\{g\in\GL_n(\mathfrak{o}_2)\mid g\equiv 1\bmod \mathfrak{p}\}=1+\mathfrak{p} M_n(\mathfrak{o}_2).\]
The representations of $K^1$ are given as follows: fix some character ${\psi:\mathfrak{o}_2\to\mathbb{C}^\times}$ which is non-trivial on $\mathfrak{p}$. For $\beta\in M_n(\mathfrak{o}_1)$, define $\psi_\beta\in\Irr(K^1)$ by
\[\psi_\beta(1+x)=\psi(\tr(\hat{\beta} x)),\]
where $\hat{\beta}$ is a lift of $\beta$ to $M_n(\mathfrak{o}_2)$. Note in particular that $\psi_\beta$ does not depend on the choice of lift.
By Proposition \ref{Clifford theory}, each $\rho\in\Irr(\GL_n(\mathfrak{o}_2))$ defines an orbit on $\Irr(K^1)$, and since the action is given by $^g\psi_\beta=\psi_{\overline{g}\beta \overline{g}^{-1}}$, $\rho$ defines an orbit of the conjugation action of $\GL_n(\mathfrak{o}_2)$ on $M_n(\mathfrak{o}_1)$. By \cite[Proposition 2.2]{singla10}, for every $\beta$, $\psi_\beta$ extends to a representation $\hat{\psi}_\beta\in\Irr(\Stab_{\GL_n(\mathfrak{o}_2)}(\psi_\beta))$.
Thus, by Proposition \ref{Clifford theory}, the representations of the stabiliser containing $\psi_\beta$ are given by
\[\mathrm{Irr}(\mathrm{Stab}(\psi_\beta)\mid\psi_\beta)=\{\hat{\psi}_\beta\otimes\theta\mid\overline{\theta}\in\mathrm{Irr}(\Stab(\psi_\beta)/K^1)\}.\]
\par By \cite[Proposition 2.3(2)]{hill95}, we have $\Stab(\psi_\beta)/K^1\cong C_{\GL_n(\mathfrak{o}_1)}(\beta)$, so we look at representations of this centraliser. In fact,
\begin{proposition}
The representation zeta polynomial of $\GL_n(\oo_2)$ is determined by $\mathcal{R}_{C_{\GL_m(\oo_1)}(\beta)}(\mathcal{D})$ for $m\leq n$ and $\beta$ a nilpotent Jordan canonical form. If the $C_{\GL_m(\oo_1)}(\beta)$ have the polynomial property as in Conjecture \ref{Onn conjecture}, then the same is true for $\GL_n(\oo_2)$.
\end{proposition}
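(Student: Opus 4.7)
The plan is to combine the Clifford-theoretic set-up already in place in Section \ref{application} with the Jordan decomposition of matrices over $\oo_1$. By Remark \ref{sum of zeta polynomials above representatives}, together with the identification of the $\GL_n(\oo_2)$-action on $\Irr(K^1)$ with the conjugation action of $\GL_n(\oo_1)$ on $M_n(\oo_1)$, I would first write
\[\mathcal{R}_{\GL_n(\oo_2)}(\mathcal{D}) = \sum_{[\beta]} \mathcal{R}_{\Stab(\psi_\beta)\mid\psi_\beta}\bigl(\mathcal{D}^{[\GL_n(\oo_1):C_{\GL_n(\oo_1)}(\beta)]}\bigr),\]
summed over conjugacy class representatives $\beta \in M_n(\oo_1)$. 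Since each $\psi_\beta$ extends to its stabiliser and $\Stab(\psi_\beta)/K^1 \cong C_{\GL_n(\oo_1)}(\beta)$, the tensoring bijection in Proposition \ref{Clifford theory} gives $\mathcal{R}_{\Stab(\psi_\beta)\mid\psi_\beta}(\mathcal{D}) = \mathcal{R}_{C_{\GL_n(\oo_1)}(\beta)}(\mathcal{D})$, so the whole problem reduces to understanding these centralisers.

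To pass to nilpotent Jordan canonical forms, I would invoke the Jordan decomposition $\beta = \beta_s + \beta_n$ with $\beta_s,\beta_n \in M_n(\oo_1)$ commuting, $\beta_s$ semisimple and $\beta_n$ nilpotent. The centraliser of $\beta_s$ decomposes as a direct product $\prod_i \GL_{m_i}(\FF_{q^{d_i}})$, indexed by the distinct irreducible factors of its characteristic polynomial (with $m_i$ the multiplicity and $d_i$ the degree), so that
\[C_{\GL_n(\oo_1)}(\beta) \;=\; C_{C_{\GL_n(\oo_1)}(\beta_s)}(\beta_n) \;\cong\; \prod_i C_{\GL_{m_i}(\FF_{q^{d_i}})}\bigl(\beta_n^{(i)}\bigr),\]
each factor being the centraliser of a nilpotent element in $\GL_{m_i}$ over a finite extension of $\oo_1$, with $m_i \leq n$. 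This is what is meant by "determined by" the nilpotent centralisers.

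For the second assertion, I would enumerate the $\GL_n(\oo_1)$-orbits on $M_n(\oo_1)$ by their combinatorial \emph{type}, namely the data of the $(m_i,d_i)$ together with a partition of each $m_i$ describing the Jordan blocks of $\beta_n^{(i)}$. There are finitely many such types (independent of $q$), and for each type the number of orbits and the orbit size are polynomials in $q$ by a standard count. The polynomial property of Conjecture \ref{Onn conjecture} is preserved under direct products (dimensions and frequencies both multiply to polynomials) and under the substitution $q\mapsto q^{d_i}$, so the hypothesis on $C_{\GL_m(\oo_1)}(\beta)$ for $m\leq n$ and nilpotent $\beta$ transfers to each product centraliser appearing above; the remaining substitution $\mathcal{D}\mapsto\mathcal{D}^{p(q)}$, where $p(q)$ is the orbit size, again preserves the polynomial property. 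The main obstacle is bookkeeping: one must confirm that the Jordan-canonical-form classification is uniform in $q$ (so that the same family of polynomials $d_i(x),m_i(x)$ describes $C_{\GL_m(\FF_{q^d})}(\beta)$ with $x=q^d$) and that the resulting aggregation over orbit types packages cleanly into a finite disjoint union satisfying conditions (1)--(2) of Conjecture \ref{Onn conjecture}.
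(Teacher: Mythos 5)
Your proposal is correct and reaches the conclusion by a genuinely different route from the paper. You work at the level of $r=2$ throughout: after using Clifford theory and the isomorphism $\Stab(\psi_\beta)/K^1\cong C_{\GL_n(\oo_1)}(\beta)$ to write $\mathcal{R}_{\GL_n(\oo_2)}$ as a sum over conjugacy classes of $\beta\in M_n(\oo_1)$, you factor the centraliser over $\oo_1$ directly via the Jordan--Chevalley decomposition into a product $\prod_i C_{\GL_{m_i}(\FF_{q^{d_i}})}(\beta_n^{(i)})$ of nilpotent centralisers over finite extensions, and then close the argument by observing that the polynomial property is stable under products, under $q\mapsto q^{d}$, and under the substitution $\mathcal{D}\mapsto\mathcal{D}^{p(q)}$ for a polynomial orbit size. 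The paper instead invokes Hill's Theorem 2.13 on geometric conjugacy classes: it groups the $\beta$ by their semisimple part $\overline{s}$, uses a lift $s\in M_n(\oo_r)$ with $C_{\GL_n(\oo_r)}(s)\cong\prod_j\GL_{m_j}(\oo^{(j)}/(\varpi^r))$, and then transports $\mathcal{C}_{\GL_n(\oo_r)}(\overline{s})$ to $\mathcal{C}_{C_{\GL_n(\oo_r)}(s)}(0)$ via that theorem, together with a twist by $\tilde{\psi}_{-aI}$ to handle scalar semisimple parts. Your route is the more elementary of the two for $r=2$, since it avoids the black-box geometric-conjugacy-class bijection and the twist argument, at the cost of the bookkeeping you flag (uniformity of the Jordan type classification in $q$ and the orbit counting); the paper's route is phrased so as to carry over to general $r$, where the congruence quotient is no longer abelian and the direct Clifford-theory reduction to $C_{\GL_n(\oo_1)}(\beta)$ is not available. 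One small point worth making explicit in your write-up: when you pass to the factors $C_{\GL_{m_i}(\FF_{q^{d_i}})}(\beta_n^{(i)})$ you are applying the hypothesis with a different residue field, which is legitimate precisely because Conjecture \ref{Onn conjecture} is uniform over $\oo\in\mathfrak{O}$ and $m_i\leq n$ for every $i$.
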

\begin{proof}
We outline an argument due to Hill \cite{hilljordan}. Let $\overline{s}\in M_n(\mathfrak{o}_1)$ be semisimple. For~${\beta\in M_n(\mathfrak{o}_1)}$, write $\beta\sim_\mathrm{s.s.} \overline{s}$ if the semisimple part of the Jordan-Chevalley decomposition of $\beta$ is conjugate to $\overline{s}$. The \textbf{geometric conjugacy class} of $\overline{s}$ is
\[\mathcal{C}_{\GL_n(\mathfrak{o}_r)}(\overline{s})=\{\rho\in\Irr(\GL_n(\mathfrak{o}_r))\mid\langle \psi_\beta,\rho|_{K^{r-1}}\rangle\neq 0\text{ for some }\beta\in M_n(\mathfrak{o}_1)\text{ s.t. }\beta\sim_\mathrm{s.s.}\overline{s}\}.\]
For each semisimple $\overline{s}\in M_n(\mathfrak{o}_1)$, there is a lift $s\in M_n(\mathfrak{o}_r)$ with the property that there exist unramified extensions $\mathfrak{o}^{(j)}$ of $\mathfrak{o}$ such that
\[C_{\GL_n(\mathfrak{o}_r)}(s)\cong\prod_{j=1}^t\GL_{m_j}(\mathfrak{o}^{(j)}/(\varpi^r)),\]
where $\sum_{j=1}^t m_j=n$ and $t=1$ if and only if $\overline{s}=aI$ for some $a\in\mathfrak{o}_1$.
By Theorem 2.13 of \cite{hilljordan}, there is a bijection
\[\mathcal{C}_{\GL_n(\mathfrak{o}_r)}(\overline{s})\to \mathcal{C}_{C_{\GL_n(\mathfrak{o}_r)}(s)}(0);\quad\rho\mapsto\rho_\mathrm{nil},\]
where we interpret the set $\mathcal{C}_{C_{\GL_n(\mathfrak{o}_r)}(s)}(0)$ to consist of tensor products of representations in each $\mathcal{C}_{\GL_{m_j}(\mathfrak{o}^{(j)}/(\varpi^r))}(0)$.
Further, for all $\rho\in\mathcal{C}_{\GL_n(\mathfrak{o}_r)}(\overline{s})$, $\dim\rho/\dim\rho_\mathrm{nil}$ is a polynomial in $q=|\oo_1|$ which depends only on the decomposition of $\oo_1^n$ into $\overline{s}$-invariant subspaces and is independent of $\rho$. This allows us to work inductively; if we know $\mathcal{C}_{\GL_m(\oo_r)}(0)$ for $m<n$, then we can construct $\mathcal{C}_{\GL_n(\oo_r)}(\overline{s})$, where $\overline{s}$ is not a scalar matrix. On the other hand, if $\rho\in\Irr(\GL_n(\oo_r)\mid\psi_{aI+N})$ where $N$ is nilpotent, then one can show that $\psi_{-aI}$ extends to $\tilde{\psi}_{-aI}\in\Irr(\GL_n(\oo_r))$, and that $\rho\otimes\tilde{\psi}_{-aI}\in\Irr(\GL_n(\oo_r)\mid\psi_N)$ with the same dimension as $\rho$.
\end{proof}
\par Furthermore, each of the centralisers $C_{\GL_n(\oo_r)}(\beta)$ is isomorphic to some $G_\lambda$:
\begin{proposition}\label{centraliser isomorphic to G lambda} \cite[Proposition 4.11]{singla10}
Let $\lambda$ be a partition of $n$ and $\beta_\lambda$ a nilpotent Jordan matrix of type $\lambda$. Then there is an isomorphism of groups
\[C_{\GL_n(\mathfrak{o}_1)}(\beta_\lambda)\cong G_{\lambda,\mathbb{F}_q[[t]]}.\]
\end{proposition}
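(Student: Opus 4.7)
The plan is to exhibit the isomorphism by turning $\mathbb{F}_q^n$ into a module over $\mathbb{F}_q[t]$ (and hence over $\mathbb{F}_q[[t]]$) using the nilpotent Jordan matrix $\beta_\lambda$, and then recognising the centraliser as the corresponding automorphism group.

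First I would define an $\mathbb{F}_q[t]$-module structure on $V=\mathbb{F}_q^n$ by letting $t$ act as $\beta_\lambda$. Because $\beta_\lambda$ is a nilpotent Jordan matrix with block sizes $\lambda_1,\dots,\lambda_k$, the classification of finitely generated modules over the PID $\mathbb{F}_q[t]$ (or a direct inspection of Jordan blocks) gives an $\mathbb{F}_q[t]$-linear isomorphism
\[V\;\cong\;\bigoplus_{i=1}^{k}\mathbb{F}_q[t]/(t^{\lambda_i}).\]

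Next I would identify the centraliser with the $\mathbb{F}_q[t]$-linear automorphism group of $V$. An element $g\in\GL_n(\mathbb{F}_q)$ commutes with $\beta_\lambda$ iff it commutes with the $t$-action, which is exactly the condition that $g$ is $\mathbb{F}_q[t]$-linear. Hence
\[C_{\GL_n(\mathbb{F}_q)}(\beta_\lambda)\;=\;\Aut_{\mathbb{F}_q[t]}(V)\;\cong\;\Aut_{\mathbb{F}_q[t]}\Bigl(\bigoplus_{i=1}^{k}\mathbb{F}_q[t]/(t^{\lambda_i})\Bigr).\]

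Finally, I would upgrade the coefficient ring from $\mathbb{F}_q[t]$ to $\mathbb{F}_q[[t]]$. The $\mathbb{F}_q[t]$-action on each summand $\mathbb{F}_q[t]/(t^{\lambda_i})$ factors through $\mathbb{F}_q[t]/(t^{\lambda_i})\cong\mathbb{F}_q[[t]]/(t^{\lambda_i})$, since $t$ is nilpotent there; consequently every $\mathbb{F}_q[t]$-linear endomorphism is automatically $\mathbb{F}_q[[t]]$-linear, and the module $\bigoplus_i\mathbb{F}_q[t]/(t^{\lambda_i})$ coincides with $\bigoplus_i\mathbb{F}_q[[t]]/(t^{\lambda_i})$ as an $\mathbb{F}_q[[t]]$-module. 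Therefore the automorphism group above equals $G_{\lambda,\mathbb{F}_q[[t]]}$, finishing the proof.

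There is no real obstacle here: the whole content is the observation that a conjugation centraliser in $\GL_n$ is the same thing as an endomorphism ring commutation statement, combined with the fact that $\mathbb{F}_q[t]$- and $\mathbb{F}_q[[t]]$-structures agree on modules annihilated by a power of $t$. The only care needed is to state the Jordan block decomposition as an $\mathbb{F}_q[t]$-module isomorphism so that matching the partition $\lambda$ on both sides is immediate.
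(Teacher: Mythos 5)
Your proof is correct and is the standard argument for this fact. Note that the paper does not actually prove this proposition itself but cites it to Singla; your approach (make $\mathbb{F}_q^n$ into an $\mathbb{F}_q[t]$-module via $\beta_\lambda$, identify the centraliser with $\mathbb{F}_q[t]$-linear automorphisms, and observe that an $\mathbb{F}_q[t]$-module annihilated by $t^{\max\lambda_i}$ is the same thing as an $\mathbb{F}_q[[t]]$-module annihilated by the corresponding power of the maximal ideal) is the canonical way to establish it and lines up with the cited source. One small point of care worth stating explicitly: an $\mathbb{F}_q$-linear endomorphism commutes with $\beta_\lambda$ iff it commutes with all polynomials in $\beta_\lambda$, hence iff it is $\mathbb{F}_q[t]$-linear; and since the $\mathbb{F}_q[[t]]$-action on $\bigoplus_i\mathbb{F}_q[[t]]/(t^{\lambda_i})$ factors through the quotient $\mathbb{F}_q[t]/(t^{\max\lambda_i})$, commuting with $t$ already forces commuting with the full $\mathbb{F}_q[[t]]$-action, so the upgrade from $\mathbb{F}_q[t]$-linearity to $\mathbb{F}_q[[t]]$-linearity loses nothing. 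You have all of this; no gaps.
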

Thus, if $G_{\lambda,\mathfrak{o}}$ has the polynomial property of Conjecture \ref{Onn conjecture} for all partitions $\lambda$ of $m$ with $m\leq n$, then the same is true for $\GL_n(\mathfrak{o}_2)$.

\section{Proofs of Propositions \ref{representations of P_n} and \ref{type 4 stabiliser}}

We fix the following notation throughout the subsequent proofs:
\[\mathbf{v}=\begin{pmatrix}v_1\\\vdots\\v_{n-2}\end{pmatrix},\mathbf{w}=\begin{pmatrix}w_1\\\vdots\\w_{n-1}\end{pmatrix},Y=\begin{pmatrix}y_{11}&\dots&y_{1,n-2}\\\vdots&\ddots&\vdots\\y_{n-2,1}&\dots&y_{n-2,n-2}\end{pmatrix}.\]
For $i\in\mathfrak{o}_1$, let $\sigma_i$ be the additive character of $\mathfrak{o}_1$ defined by $\sigma_i(x)=\zeta_p^{\tr(ix)}$, where~$\zeta_p$ is a fixed primitive $p$th root of unity in $\mathbb{C}$ and $\tr$ denotes the absolute trace $\tr_{\mathfrak{o}_1/\mathbb{F}_p}$. 
\begin{proof}[Proof of Proposition \ref{representations of P_n}]
Note $P_n=\tilde{N}_n\rtimes \tilde{H}_n$, where
\[\tilde{N}_n=\left\{\begin{pmatrix}1&0\\\mathbf{v}&I\end{pmatrix}\mid\mathbf{v}\in\mathfrak{o}_1^{n-1}\right\},\tilde{H}_n=\left\{\begin{pmatrix}1&0\\0&Y\end{pmatrix}\mid Y\in\GL_{n-1}(\mathfrak{o}_1)\right\}.\]
Parameterise the irreducible representations of $\tilde{N}_n$:
\begin{align*}\omega_{i_1,\dots,i_{n-1}}\begin{pmatrix}1&0\\\mathbf{v}&I\end{pmatrix}&=\sigma_{i_1}(v_1)\dots\sigma_{i_{n-1}}(v_{n-1})\\&=\zeta_p^{\mathrm{tr}(i_1v_1+\dots+i_{n-1}v_{n-1})}.\end{align*}
The conjugate of an element of $\tilde{N}_n$ by an element of $\tilde{H}_n$ takes the following form:
\[\begin{pmatrix}1&0\\&Y\end{pmatrix}\begin{pmatrix}1&0\\\mathbf{v}&I\end{pmatrix}\begin{pmatrix}1&0\\&Y\end{pmatrix}^{-1}=\begin{pmatrix}1&0\\Y\mathbf{v}&I\end{pmatrix}.\]
Writing $h=\begin{pmatrix}1&0\\0&Y\end{pmatrix}$,
\begin{align*}
\omega_{i_1,\dots,i_{n-1}}^h\begin{pmatrix}1&0\\\mathbf{v}&I\end{pmatrix}&=\omega_{i_1,\dots,i_{n-1}}\begin{pmatrix}1&0\\Y\mathbf{v}&I\end{pmatrix}\\
&=\zeta_p^{\mathrm{tr}(\theta(\mathbf{v}))},
\end{align*}
where
\begin{align*}
\theta(\mathbf{v})&=i_1(y_{11}v_1+\dots+y_{1,n-1}v_{n-1})\\
&\quad+\dots\\
&\quad+i_{n-1}(y_{n-1,1}v_1+\dots+y_{n-1,n-1}v_{n-1})\\
&=(i_1y_{11}+\dots+i_{n-1}y_{n-1,1})v_1\\
&\quad +\dots\\
&\quad+(i_1y_{1,n-1}+\dots+i_{n-1}y_{n-1,n-1})v_{n-1}.
\end{align*}
Therefore, the right action of $\tilde{H}_n$ on $\Irr(\tilde{N}_n)$ is given explicitly:
\[\omega_{i_1,\dots,i_{n-1}}^h=\omega_{(i_1y_{11}+\dots+i_{n-1}y_{n-1,1}),\dots,(i_1y_{1,n-1}+\dots+i_{n-1}y_{n-1,n-1})}.\]
The index transformation is given by the matrix $Y$, which is non-singular of free choice, and since every non-zero vector can be mapped to any other by multiplication by a non-singular matrix, there are two orbits, $\{\omega_{i_1,\dots,i_{n-1}}|i_k\neq 0\text{ for some }k\}$ and $\{\omega_{0,\dots,0}\}$. We find equations for the stabilisers:
\begin{align*}
\omega_{i_1,\dots,i_{n-1}}^h=\omega_{i_1,\dots,i_{n-1}}&\iff\text{for all } \mathbf{v}\in\mathfrak{o}_1^{n-1},\zeta_p^{\mathrm{tr}(\theta(\mathbf{v}))}=\zeta_p^{\mathrm{tr}(i_1v_1+\dots+i_{n-1}v_{n-1})}\\
&\iff\text{for all } \mathbf{v}\in\mathfrak{o}_1^{n-1},\zeta_p^{\mathrm{tr}(\theta'(\mathbf{v}))}=1,
\end{align*}
where
\begin{align*}
\theta'(\mathbf{v})&=\theta(\mathbf{v})-(i_1v_1+\dots+i_{n-1}v_{n-1})\\
&=(i_1(y_{11}-1)+\dots+i_{n-1}y_{n-1,1})v_1\\
&\quad+\dots\\
&\quad+(i_1y_{1,n-1}+\dots+i_{n-1}(y_{n-1,n-1}-1))v_{n-1}
\end{align*}
Setting all but one of the $v_j$ to zero in turn, we obtain the equations
\begin{align*}
i_1(y_{11}-1)+\dots+i_{n-1}y_{n-1,1}&=0\\
&\vdots\\
i_1y_{1,n-1}+\dots+i_{n-1}(y_{n-1,n-1}-1)&=0.
\end{align*}
Choose the representative $\omega_{1,0,\dots,0}$ of the non-trivial orbit:
\begin{align*}
\mathrm{Stab}_{\tilde{H}_n}(\omega_{1,0,\dots,0})&=\left\{\begin{pmatrix}1&&&\\&1&0&\dots&0\\&y_{21}&y_{22}&\dots&y_{2,n-1}&\\&\vdots&\vdots&\ddots&\vdots&\\&y_{n-1,1}&y_{n-1,2}&\dots&y_{n-1,n-1}\end{pmatrix}\middle|\begin{aligned}x,y_{jk}\in\mathfrak{o}_1,\\\det\neq 0\end{aligned}\right\}\\
&\cong P_{n-1},
\end{align*}
with index
\[\frac{|\GL_{n-1}(\mathfrak{o}_1)|}{|\mathfrak{o}_1^{n-2}\rtimes\GL_{n-2}(\mathfrak{o}_1)|}=q^{n-1}-1\]
in $\tilde{H}_n$. In the case of $n=2$, this is the trivial group. Applying Proposition \ref{Clifford theory abelian normal}, the representation zeta polynomial above $\omega_{1,0,\dots,0}$ is
\[\mathcal{R}_{P_n\mid\omega_{1,0,\dots,0}}(\mathcal{D})=\mathcal{R}_{P_{n-1}}(\mathcal{D}^{q^{n-1}-1}).\]
In the case of the trivial orbit, $\mathrm{Stab}_{\tilde{H}_n}(\omega_{0,\dots,0})=\tilde{H}_n\cong\mathrm{GL}_{n-1}(\mathfrak{o}_1)$, and
\[\mathcal{R}_{P_n\mid\omega_{0,\dots,0}}(\mathcal{D})=\mathcal{R}_{\GL_{n-1}(\mathfrak{o}_1)}(\mathcal{D}).\]
We can deduce the following formula for the representation zeta polynomial of $P_n$ ($n\geq 2$):
\[\mathcal{R}_{P_n}(\mathcal{D})=\mathcal{R}_{P_{n-1}}(\mathcal{D}^{q^{n-1}-1})+\mathcal{R}_{\GL_{n-1}(\mathfrak{o}_1)}(\mathcal{D}).\]
Note that $P_1=1$, therefore $\mathcal{R}_{P_1}(\mathcal{D})=\mathcal{D}$.
\end{proof}
\begin{proof}[Proof of Theorem \ref{type 4 stabiliser}]
First note that
\[T_2=\left\{\begin{pmatrix}1&w_1\\0&1\end{pmatrix}\middle|w_1\in\mathfrak{o}_1\right\},\]
has representation zeta polynomial $q\mathcal{D}$, as claimed. From now on, let $n\geq 3$.
Label the characters of $N_n$ by
\[\chi_{i_1,\dots,i_{n-1}}\begin{pmatrix}1&\mathbf{w}^T\\0&I\end{pmatrix}=\sigma_{i_1}(w_1)\sigma_{i_2}(w_2)\dots\sigma_{i_{n-1}}(w_{n-1}).\]
Let $Z=\begin{pmatrix}1&0\\\mathbf{v}&Y\end{pmatrix}$, and $h=\begin{pmatrix}1&0\\0&Z\end{pmatrix}$. The following formula for the conjugate holds:
\[\begin{pmatrix}1&0\\0&Z\end{pmatrix}^{-1}\begin{pmatrix}1&\mathbf{w}^T\\0&I\end{pmatrix}\begin{pmatrix}1&0\\0&Z\end{pmatrix}=\begin{pmatrix}I&\mathbf{w}^TZ\\0&1\end{pmatrix}.\]
Therefore,
\begin{align*}
^h\chi_{i_1,\dots,i_{n-1}}\begin{pmatrix}1&\mathbf{w}^T\\0&I\end{pmatrix}&=\chi_{i_1,\dots,i_{n-1}}\begin{pmatrix}I&\mathbf{w}^TZ\\0&1\end{pmatrix}\\&=\zeta_p^{\mathrm{tr}(\eta(\mathbf{w}))},
\end{align*}
where
\begin{align*}
\eta(\mathbf{w})&=i_1(w_1+v_1w_2+\dots+v_{n-2}w_{n-1})\\
&\quad +i_2(y_{11}w_2+\dots+y_{n-2,1}w_{n-1})\\
&\quad+\dots\\\
&\quad+i_{n-1}(y_{1,n-2}w_2+\dots+y_{n-2,n-2}w_{n-1})\\
&=i_1w_1+(i_1v_1+i_2y_{11}+\dots+i_{n-1}y_{1,n-2})w_2\\
&\quad+\dots\\
&\quad+(i_1v_{n-2}+i_2y_{n-2,1}+\dots+i_{n-1}y_{n-2,n-2})w_{n-1}.
\end{align*}
Therefore, there is a left action of $H_n$ on $\mathrm{Irr}(N_n)$ given explicitly by the index transformation
\[^h\chi_{i_1,\dots,i_{n-1}}=\chi_{i_1,(i_1v_1+i_2y_{11}+\dots+i_{n-1}y_{1,n-2}),\dots,(i_1v_{n-2}+i_2y_{n-2,1}+\dots+i_{n-1}y_{n-2,n-2})}.\]
For each $i_1\neq 0$, choosing $Y$ and then taking a free choice of $v_1,\dots,v_{n-2}$ gives an orbit $\chi_{i_1,*,\dots,*}=\{\chi_{i_1,\dots,i_{n-1}}\mid i_2,\dots,i_{n-1}\in\mathfrak{o}_1\}$, which we shall refer to as type (a). There are $q-1$ such orbits, one for each $i_1$.\\
If $i_1=0$, the index transformation is given by the matrix $Y^T$, which is invertible of free choice, giving an orbit \[\{\chi_{0,i_2,\dots,i_{n-1}}\mid i_k\neq 0\text{ for some }2\leq k\leq n-1\},\]
which we call type (b). The remaining orbit is the trivial one, $\{\chi_{0,\dots,0}\}$, which we call type (c).
\par We deduce equations for the stabilisers:
\begin{align*}
^h\chi_{i_1,\dots,i_{n-1}}=\chi_{i_1,\dots,i_{n-1}}&\iff\text{for all } \mathbf{w}\in\mathfrak{o}_1^{n-1},\zeta_p^{\mathrm{tr}(\eta(\mathbf{w}))}=\zeta_p^{\mathrm{tr}(i_1w_1+\dots+i_{n-1}w_{n-1})}\\
&\iff\text{for all } \mathbf{w}\in\mathfrak{o}_1^{n-1},\zeta_p^{\mathrm{tr}(\eta'(\mathbf{w}))}=1,
\end{align*}
where
\begin{align*}
\eta'(\mathbf{w})&=\eta(\mathbf{w})-(i_1w_1+\dots+i_{n-1}w_{n-1})\\
&=(i_1v_1+i_2(y_{11}-1)+\dots+i_{n-1}y_{1,n-2})w_2\\
&\quad+\dots\\
&\quad+(i_1v_{n-2}+i_2y_{n-2,1}+\dots+i_{n-1}(y_{n-2,n-2}-1))w_{n-1}.
\end{align*}
Setting all but one of $w_2,\dots,w_{n-1}$ to zero in turn, we obtain the following system of equations:
\begin{align*}
i_1v_1+i_2(y_{11}-1)+\dots+i_{n-1}y_{1,n-2}&=0\\
&\vdots\\
i_1v_{n-2}+i_2y_{n-2,1}\dots+i_{n-1}(y_{n-2,n-2}-1)&=0
\end{align*}
We find explicit forms for the stabilisers of irreducible representations of $N_n$ inside~$H_n$.\\
\underline{Type (a) ($q-1$ orbits)}\\
For $i_1\neq 0,i_2=\dots=i_{n-1}=0$, the system becomes $i_1v_1=\dots=i_1v_{n-2}=0$, therefore $v_1=\dots=v_{n-2}=0$, and
\begin{align*}
\mathrm{Stab}_H(\chi_{i_1,0,\dots,0})&=\left\{\begin{pmatrix}1&&\\&1&\\&&Y\end{pmatrix}\middle|Y\in\GL_{n-2}(\mathfrak{o}_1)\right\},
\end{align*}
with index $q^{n-2}$. For each $i_1\neq 0$,
\[\mathcal{R}_{T_n\mid\chi_{i_1,0,\dots,0}}(\mathcal{D})=\mathcal{R}_{\GL_{n-2}(\mathfrak{o}_1)}(\mathcal{D}^{q^{n-2}}),\]
therefore
\[\sum_{i_1\neq 0}\mathcal{R}_{T_n\mid\chi_{i_1,0,\dots,0}}(\mathcal{D})=(q-1)\mathcal{R}_{\GL_{n-2}(\mathfrak{o}_1)}(\mathcal{D}^{q^{n-2}}).\]
\underline{Type (b)}\\
For the non-trivial orbit with $i_1=0$, we choose $i_2=1$ and all other $i_k=0$. The system becomes $y_{11}-1=y_{21}=\dots=y_{n-2,1}=0$, therefore the stabiliser is
\begin{align*}
\mathrm{Stab}_{H_n}(\chi_{0,1,0,\dots,0})=\left\{\begin{pmatrix}1&&&&&\\&1&&&&\\&v_1&1&y_{12}&\dots&y_{1,n-2}\\&v_2&0&y_{22}&\dots&y_{2,n-2}\\&\vdots&\vdots&\vdots&\ddots&\vdots\\&v_{n-2}&0&y_{n-2,2}&\dots&y_{n-2,n-2}\end{pmatrix}\middle|\begin{aligned}v_i,y_{jk}\in\mathfrak{o}_1,\\\det\neq 0\end{aligned}\right\},
\end{align*}
with index $q^{n-2}-1$.
There is an isomorphism $\mathrm{Stab}_{H_n}(\chi_{0,1,0,\dots,0})\cong T_{n-1}$ which can be seen by conjugating by the permutation matrix corresponding to the transposition $(2,3)$. By induction, we can assume that the representations of this are known, and deal with the base case of $n=3$:
\[\Stab_{H_3}(\chi_{01})\cong T_2=\left\{\begin{pmatrix}1&w_1\\&1\end{pmatrix}\middle| w_1\in\mathfrak{o}_1\right\},\]
which has representation zeta polynomial
\[\mathcal{R}_{T_2}(\mathcal{D})=q\mathcal{D}.\]
For general $n$, we have
\[\mathcal{R}_{T_n\mid\chi_{0,1,0,\dots,0}}(\mathcal{D})=\mathcal{R}_{T_{n-1}}(\mathcal{D}^{q^{n-2}-1}).\]
\underline{Type (c)}\\
The group $H_n$ is isomorphic to $P_{n-1}$. Consider the case of the trivial representation,
\[\mathrm{Stab}_{H_n}(\chi_{0,\dots,0})=H_n.\]
The contribution to the representation zeta polynomial of $T_n$ is
\begin{align*}
\mathcal{R}_{T_n\mid\chi_{0,\dots,0}}(\mathcal{D})&=\mathcal{R}_{P_n}(\mathcal{D}),
\end{align*}
which we know by Proposition \ref{representations of P_n}.
\par Collecting the representation zeta polynomials above the representatives and adding them (see Remark \ref{sum of zeta polynomials above representatives}) will give the representation zeta polynomial of $T_n$ as claimed.
\end{proof}
\section{The Representation Zeta Polynomial of $G_{(\ell,1^n)}$}\label{arbitrary characteristic}
Using the method of proof of \cite[Lemma 7.7]{singla10} (an argument attributed to U.~Onn), we derive the representation zeta polynomial of $G_{(\ell,1^n),\mathfrak{o}}$, thus proving Theorem \ref{main theorem}. 
\begin{proposition}\label{heisenberg lift}\cite{bushnellfrohlich83}
Let $G$ be a finite group and $N$ a normal subgroup of $G$ such that $V=G/N$ is an elementary abelian $p$-group, regarded as an $\mathbb{F}_p$-vector space. Let $\chi\in\Irr(N)$ be invariant in $G$ and suppose that the bilinear form \[h_\chi:V\times V\to\mu_p(\mathbb{C})\cong\mathbb{F}_p;\quad h_\chi( g_1N,g_2N)=\chi([g_1,g_2])\]
is non-degenerate. Then there exists a unique $\rho_\chi\in\Irr(G)$ such that $\langle\chi,\rho_\chi|_N\rangle\neq 0$. Moreover, $\rho_\chi|_N=e\chi$ for some $e\geq 1$, and $\dim \rho_\chi=[G:N]^{\frac 12}$.
\end{proposition}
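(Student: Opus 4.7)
The plan is to realise $\Irr(G \mid \chi)$ as a set of modules over a twisted group algebra of $V = G/N$, and then use that the non-degeneracy of $h_\chi$ forces this twisted group algebra to be a full matrix algebra.

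First, I would apply Proposition \ref{Clifford theory}. Since $\chi$ is invariant, $\Stab_G(\chi) = G$, so every $\rho \in \Irr(G \mid \chi)$ satisfies $\Res^G_N \rho = e_\rho \chi$ for some $e_\rho \geq 1$. Because $\chi$ takes $p$-th-root-of-unity values on commutators, in the situation of interest $\chi$ is a linear character, and the identity $\Res^G_N \Ind^G_N \chi = [G:N]\,\chi$ together with Frobenius reciprocity gives $[G:N] = \sum_{\rho \in \Irr(G\mid\chi)} e_\rho^2$; uniqueness of $\rho_\chi$ will therefore force $e_{\rho_\chi} = \dim \rho_\chi = [G:N]^{1/2}$. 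So everything reduces to counting $\Irr(G \mid \chi)$.

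Second, I would realise $\Irr(G \mid \chi)$ via projective representations. Let $W_\chi$ carry $\chi$. For each $g \in G$, Schur's lemma produces an intertwiner $T_g \in \GL(W_\chi)$, unique up to scalar, with $T_g \chi(n) T_g^{-1} = \chi(gng^{-1})$ for all $n \in N$; since $T_n$ is scalar for $n \in N$, the assignment $gN \mapsto [T_g]$ descends to a projective representation of $V$, giving a cocycle $\alpha: V \times V \to \mathbb{C}^\times$. Extensions of $\chi$ to representations of $G$ above $\chi$ correspond bijectively to modules over the twisted group algebra $\mathbb{C}^{\alpha^{-1}}[V]$. A direct calculation in $\GL(W_\chi)$ gives
\[
T_{g_1} T_{g_2} T_{g_1}^{-1} T_{g_2}^{-1} = \chi([g_1,g_2])\cdot I_{W_\chi} = h_\chi(g_1 N, g_2 N)\cdot I_{W_\chi},
\]
so the ratio $\alpha(v_1,v_2)/\alpha(v_2,v_1)$ equals $h_\chi(v_1,v_2)$; that is, $h_\chi$ is exactly the alternating form attached to the cocycle class of $\alpha$ on the elementary abelian $p$-group $V$.

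Third, I would invoke the structure theorem for twisted group algebras of symplectic $\mathbb{F}_p$-spaces: if $h_\chi$ is non-degenerate, then $V$ is symplectic of some even dimension $2m$, and the twisted group algebra $\mathbb{C}^{\alpha}[V]$ has trivial centre, hence is simple. Since its dimension over $\mathbb{C}$ equals $|V| = [G:N]$, it is isomorphic to $M_{p^m}(\mathbb{C})$ with $p^m = [G:N]^{1/2}$, and therefore has a unique simple module, of dimension $[G:N]^{1/2}$. Pulling this back through the dictionary of Step~2 produces a unique $\rho_\chi \in \Irr(G \mid \chi)$ with the asserted dimension and with $\rho_\chi|_N = e\chi$ for $e = [G:N]^{1/2}$.

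The main obstacle is the identification of the alternating part of the Schur-obstruction cocycle $\alpha$ with the geometrically defined form $h_\chi$, and the subsequent application of the finite Stone--von Neumann (Heisenberg) theorem to conclude simplicity of $\mathbb{C}^\alpha[V]$; once these are in place the dimension formula and uniqueness are immediate consequences of the matrix-algebra structure.
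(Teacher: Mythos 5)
Your approach is correct but genuinely different from the paper's. The paper's proof is the constructive Heisenberg (Stone--von Neumann) argument: choose a maximal isotropic subspace $J/N$ for $h_\chi$, extend $\chi$ to $J$, induce to $G$, and then check independence of the choices. You instead work through the Mackey obstruction: reduce to modules over the twisted group algebra $\mathbb{C}^{\alpha^{-1}}[V]$, identify the alternating character of the cocycle $\alpha$ with $h_\chi$, and invoke the fact that a non-degenerate alternating form forces the twisted group algebra to have trivial centre and hence to be $M_{[G:N]^{1/2}}(\mathbb{C})$, which has a unique simple module. Both are standard proofs; the paper's gives an explicit model of $\rho_\chi$ (needed later, since the author actually realises $\rho_\chi$ via an explicit $H^\mathrm{iso}$), whereas yours isolates the counting more cleanly and makes uniqueness and the dimension formula simultaneous byproducts of the matrix-algebra structure.

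Two small points worth tightening. First, the identity $T_{g_1}T_{g_2}T_{g_1}^{-1}T_{g_2}^{-1}=\chi([g_1,g_2])\,I_{W_\chi}$ should be derived with more care: the left side equals $\alpha(v_1,v_2)/\alpha(v_2,v_1)$ by the cocycle relation, and relating this to $\chi([g_1,g_2])$ requires fixing a normalisation (e.g.\ a set-theoretic section $s:V\to G$ with $\hat\chi(n\,s(v))=\chi(n)\hat\chi(s(v))$) and then using $g_1g_2=[g_1,g_2]\,g_2g_1$; depending on the commutator convention one gets $h_\chi$ or $h_\chi^{-1}$, which is harmless for non-degeneracy. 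In the linear case the intertwiners $T_g$ are unconstrained scalars, so they do not by themselves determine $\alpha$; the cocycle must be taken to be the Mackey obstruction attached to $\chi$. Second, the parenthetical claim that the hypotheses force $\chi$ to be linear is unnecessary and not quite right as an implication; the argument works for general irreducible $\chi$ once one notes that the commutator of the $T_g$'s is scalar by Schur's lemma, and in the application (where $N=Z(H)$ is abelian) $\chi$ is linear in any case.
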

\begin{proof}
Choose a maximal isotropic subspace $J/N$ for the bilinear form. The representation $\rho_\chi$ is obtained by extending $\chi$ to $J$ and then inducing to $G$. One can show that $\rho_\chi$ does not depend on the choice of $J$ or the extension of~$\chi$.
\end{proof}
\begin{lemma}\label{extending linear characters}\cite[Lemma 5.4]{singla10}
Let $G$ be a finite group, $N\lhd G$ and $M\leq G$ such that~$G=NM$. Let $\psi_1,\psi_2$ be one-dimensional representations of $N$ and $M$ respectively such that $\psi_1$ is invariant in $M$ and $\psi_1|_{M\cap N}=\psi_2|_{M\cap N}$. Then $\psi_1\psi_2\in\Irr(G)$ defined by $\psi_1\psi_2(nm)=\psi_1(n)\psi_2(m)$ is the unique one-dimensional representation of $G$ extending both $\psi_1$ and $\psi_2$.
\end{lemma}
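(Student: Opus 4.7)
The plan is to take the proposed formula $\psi_1\psi_2(nm) = \psi_1(n)\psi_2(m)$ as a definition and verify directly that it makes sense and does what is claimed. I would check four things, in order: that the formula is well-defined on $G = NM$, that it is multiplicative, that it restricts correctly to $N$ and to $M$, and that no other one-dimensional extension exists.

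For well-definedness, the ambiguity in expressing $g \in G$ as $nm$ is controlled by $N \cap M$: if $n_1m_1 = n_2m_2$, then $n_2^{-1}n_1 = m_2m_1^{-1} \in N \cap M$. The hypothesis $\psi_1|_{N \cap M} = \psi_2|_{N \cap M}$ is then exactly what is needed to conclude $\psi_1(n_1)\psi_2(m_1) = \psi_1(n_2)\psi_2(m_2)$.

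For multiplicativity, the key rearrangement is
\[(n_1m_1)(n_2m_2) = \bigl(n_1 \cdot m_1 n_2 m_1^{-1}\bigr)(m_1m_2),\]
which is a valid $NM$-decomposition by normality of $N$. Applying the formula and comparing to $\psi_1(n_1)\psi_2(m_1)\psi_1(n_2)\psi_2(m_2)$ leaves a single discrepancy factor $\psi_1(m_1 n_2 m_1^{-1})/\psi_1(n_2)$, which equals $1$ precisely by the assumed $M$-invariance of $\psi_1$.

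The remaining parts are essentially automatic: setting $m = 1$ or $n = 1$ yields the claimed restrictions, and any one-dimensional $\chi \in \Irr(G)$ extending both $\psi_1$ and $\psi_2$ must satisfy $\chi(nm) = \chi(n)\chi(m) = \psi_1(n)\psi_2(m)$, forcing $\chi = \psi_1\psi_2$. There is no real obstacle here; the only subtlety worth noting is that the two hypotheses are each used exactly once, invariance for multiplicativity and agreement on $N \cap M$ for well-definedness, so both are needed for the argument to go through.
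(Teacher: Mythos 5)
Your proof is correct and complete; it is the standard direct verification, and the paper itself does not reprove the lemma but simply cites \cite[Lemma 5.4]{singla10}. The four steps (well-definedness via $N\cap M$, multiplicativity via the rearrangement $(n_1m_1)(n_2m_2)=(n_1\,m_1n_2m_1^{-1})(m_1m_2)$ using normality and $M$-invariance, restriction, and uniqueness) are exactly what is needed, and your observation that each hypothesis is used exactly once is accurate.
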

\begin{proof}[Proof of Theorem \ref{main theorem}]
We can use Proposition \ref{heisenberg lift} to construct representations of~$G_{(\ell,1^n)}$, following the method in \cite[Lemma 7.7]{singla10}. Denote by $\mathfrak{p}_\ell^{\ell-1}$ the image of $\mathfrak{p}$ in $\mathfrak{o}_\ell$. Fix a non-trivial character $\psi:\mathfrak{o}_1\to\mathbb{C}^\times$ and define the following groups:
\[G=\begin{pmatrix}\mathfrak{o}_\ell^\times&\mathfrak{p}_\ell^{\ell-1}&\dots&\mathfrak{p}_\ell^{\ell-1}\\\mathfrak{o}_1&&&\\\vdots&&\GL_n(\mathfrak{o}_1)&\\\mathfrak{o}_1&&&\end{pmatrix},H=\begin{pmatrix}1+\mathfrak{p}_\ell^{\ell-1}&\mathfrak{p}_\ell^{\ell-1}&\dots&\mathfrak{p}_\ell^{\ell-1}\\\mathfrak{o}_1&1&&\\\vdots&&\ddots&\\\mathfrak{o}_1&&&1\end{pmatrix}.\]
Then $H$ is a normal subgroup of $G$
with centre
\[Z(H)=\begin{pmatrix}1+\mathfrak{p}_\ell^{\ell-1}&&&\\&1&&\\&&\ddots&\\&&&1\end{pmatrix}\cong\mathfrak{o}_1,\]
such that $H/Z(H)$ is an elementary abelian $p$-group. The non-trivial characters of $Z(H)$ are parameterised for $z\in Z(H),z\neq 1$ by
\[\psi_z\begin{pmatrix}1+\varpi^{\ell-1} v&&&\\&1&&\\&&\ddots&\\&&&1\end{pmatrix}=\psi(xv),\]
where
\[z=\begin{pmatrix}1+\varpi^{\ell-1} x&&&\\&1&&\\&&\ddots&\\&&&1\end{pmatrix}.\]
By explicit computation of commutators in $H$, one can check that the alternating bilinear form on $H/Z(H)$ given by
\[\langle\overline{h_1},\overline{h_2}\rangle_{\psi_z}=\psi_z([h_1,h_2])\]
is non-degenerate. Applying Proposition \ref{heisenberg lift}, there are $q-1$ pairwise inequivalent irreducible representations of $H$ lying over the $q-1$ distinct non-trivial characters of $Z(H)$, namely $\rho_\chi$ for each non-trivial character $\chi:Z(H)\to\mathbb{C}^\times$. The centre of $G$ is
\[Z(G)=\left\{\begin{pmatrix}u&\\&\overline{u}I\end{pmatrix}\middle| u\in\mathfrak{o}_\ell^\times\right\}.\]
In particular, $Z(H)\leq Z(G)$ and $Z(H)$ is normal in $G$. For $g\in G,h\in Z(H)$,
\[^g\rho_\chi(h)=\rho_\chi(g^{-1}hg)=\rho_\chi(h),\]
therefore $\Res^H_{Z(H)} {^g\rho_\chi}=\Res^H_{Z(H)}\rho_\chi=\chi$. Thus, $^g\rho_\chi=\rho_\chi$ and $\rho_\chi$ is invariant in $G$.
\par In the following discussion, fix a non-trivial character $\chi\in\Irr(Z(H))$. The subgroup
\[\overline{H^\mathrm{iso}}=\begin{pmatrix}1&&&\\\mathfrak{o}_1&1&&\\\vdots&&\ddots&\\\mathfrak{o}_1&&&1\end{pmatrix}\leq G/Z(H)\]
is a maximal isotropic subspace for $\langle,\rangle_{\psi_z}$. Consider the inverse image
\[H^\mathrm{iso}=\begin{pmatrix}1+\mathfrak{p}_\ell^{\ell-1}&&&\\\mathfrak{o}_1&1&&\\\vdots&&\ddots&\\\mathfrak{o}_1&&&1\end{pmatrix}\]
and define the character $\chi^\mathrm{iso}\in\Irr(H^\mathrm{iso})$ by \[\chi^\mathrm{iso}\begin{pmatrix}u&\\w&1\end{pmatrix}=\chi(u).\] Choose an extension $\tilde{\chi}$ of $\chi$ to $\mathfrak{o}_\ell^\times\times\GL_n(\mathfrak{o}_1)=:M$.
Define $G^\mathrm{iso}=H^\mathrm{iso}M$ so that
\[G^\mathrm{iso}=\begin{pmatrix}\mathfrak{o}_\ell^\times&0&\dots&0\\\mathfrak{o}_1&&&\\
\vdots&&\GL_n(\mathfrak{o}_1)&\\\mathfrak{o}_1&&&\end{pmatrix}.\]
Observe that $H^\mathrm{iso}$ is normal in $G^\mathrm{iso}$. Further, we show that $\chi^\mathrm{iso}$ is invariant in $M$, and that $\chi^\mathrm{iso}|_{H^\mathrm{iso}\cap M}=\tilde\chi|_{H^\mathrm{iso}\cap M}$ (for then we can apply Lemma \ref{extending linear characters}).
\par Let $m=\begin{pmatrix}y&\\&D\end{pmatrix}\in M$. Then
\begin{align*}
^m\chi^\mathrm{iso}\begin{pmatrix}u&\\w&1\end{pmatrix}&=\chi^\mathrm{iso}\left(\begin{pmatrix}y&\\&D\end{pmatrix}\begin{pmatrix}u&\\w&1\end{pmatrix}\begin{pmatrix}y^{-1}&\\&D^{-1}\end{pmatrix}\right)\\
&=\chi^\mathrm{iso}\begin{pmatrix}u&\\\overline{y}^{-1}Dw&1\end{pmatrix}\\
&=\chi(u).
\end{align*}
Therefore, $\chi^\mathrm{iso}$ is invariant in $M$. Finally, $H^\mathrm{iso}\cap M=Z(H)$, therefore \[\chi^\mathrm{iso}|_{H^\mathrm{iso}\cap M}=\chi=\tilde{\chi}|_{H^\mathrm{iso}\cap M}.\] Thus, by Lemma \ref{extending linear characters}, we get a unique linear character $\chi^\mathrm{iso}\tilde{\chi}:G^\mathrm{iso}\to\mathbb{C}^\times$.
\par Note that $\dim\rho_\chi=[H:Z(H)]^{\frac 12}=q^n$. Define $\rho_{\chi^\mathrm{iso}}=\Ind^G_{G^\mathrm{iso}}\chi^\mathrm{iso}\tilde{\chi}$ and note also that $\dim\rho_{\chi^\mathrm{iso}}=q^n$. We show that $\rho_{\chi^\mathrm{iso}}$ contains $\rho_\chi$. Indeed, since $\rho_{\chi^\mathrm{iso}}$ contains~$\chi$, $\rho_{\chi^\mathrm{iso}}|_H$ also contains $\chi$.
Since $\rho_\chi$ is the unique irreducible representation of $H$ containing $\chi$, $\rho_{\chi^\mathrm{iso}}|_H$ contains $\rho_\chi$. Further, $\rho_{\chi^\mathrm{iso}}$ and $\rho_\chi$ have equal dimension, therefore $\rho_{\chi^\mathrm{iso}}|_H=\rho_\chi$, that is, $\rho_\chi$ extends to $\rho_{\chi^\mathrm{iso}}\in\Irr(G)$.
\par By Proposition \ref{Clifford theory},
\[\Irr(G\mid\rho_\chi)=\{\rho_{\chi^\mathrm{iso}}\phi\mid\phi\in\Irr(G/H)\},\]
where $G/H\cong \mathfrak{o}_{\ell-1}^\times\times\GL_n(\mathfrak{o}_1)$. Therefore,
\begin{align*}\sum_{\substack{\chi\in\Irr(Z(H))\\\chi\neq 1}}\mathcal{R}_{G\mid\rho_\chi}(\mathcal{D})&=(q-1)\mathcal{R}_{\mathfrak{o}_{\ell-1}^\times\times\GL_n(\mathfrak{o}_1)}(\mathcal{D}^{q^n})\\
&=q^{\ell-2}(q-1)^2\mathcal{R}_{\GL_n(\mathfrak{o}_1)}(\mathcal{D}^{q^n}).
\end{align*}
This gives an expression for the representation zeta polynomial of $G_{(\ell,1^n)}$:
\begin{align*}\mathcal{R}_{G_{(\ell,1^n)}}(\mathcal{D})&=q^{\ell-2}(q-1)^2\mathcal{R}_{\GL_n(\mathfrak{o}_1)}(\mathcal{D}^{q^n})+\mathcal{R}_{G/Z(H)}(\mathcal{D}).\end{align*}
The quotient $G/Z(H)$ splits as the direct product
\[G/Z(H)=\left\{\begin{pmatrix}u&\\&\overline{u}\end{pmatrix}\middle|u\in\mathfrak{o}_{\ell-1}^\times\right\}\times\begin{pmatrix}1&\mathfrak{p}_\ell^{\ell-1}&\dots&\mathfrak{p}_\ell^{\ell-1}\\\mathfrak{o}_1&&&\\\vdots&&\GL_n(\mathfrak{o}_1)&\\\mathfrak{o}_1&&&\end{pmatrix}\]
which we can write as $\mathfrak{o}_{\ell-1}^\times\times((\mathfrak{o}_1^n\times\mathfrak{o}_1^n)\rtimes G_{(1^n)})$, therefore
\begin{align*}\mathcal{R}_{G_{(\ell,1^n)}}(\mathcal{D})&=q^{\ell-2}(q-1)^2\mathcal{R}_{\GL_n(\mathfrak{o}_1)}(\mathcal{D}^{q^n})+q^{\ell-2}(q-1)\mathcal{R}_{((\mathfrak{o}_1^n\times\mathfrak{o}_1^n)\rtimes G_{(1^n)})}(\mathcal{D}).\end{align*}
The final task is to find $\mathcal{R}_{((\mathfrak{o}_1^n\times\mathfrak{o}_1^n)\rtimes G_{(1^n)})}(\mathcal{D})$. The action in the semidirect product is the conjugation action
\[\begin{pmatrix}1&\\&\GL_n(\mathfrak{o}_1)\end{pmatrix}\curvearrowright\begin{pmatrix}1&\mathfrak{p}_\ell^{\ell-1}&\dots&\mathfrak{p}_\ell^{\ell-1}\\\mathfrak{o}_1&1&&&\\\vdots&&\ddots&\\\mathfrak{o}_1&&&1\end{pmatrix}\cong \mathfrak{o}_1^n\times\mathfrak{o}_1^n\]
given by
\[\begin{pmatrix}1&\\&D\end{pmatrix}\begin{pmatrix}1&\varpi^{\ell-1} v^T\\w&1\end{pmatrix}\begin{pmatrix}1&\\&D^{-1}\end{pmatrix}=\begin{pmatrix}1&\varpi^{\ell-1}v^TD^{-1}\\Dw&1\end{pmatrix}.\]
As in \cite{singla10}, this can be identified with the action $\GL_n(\mathfrak{o}_1)\curvearrowright(\mathfrak{o}_1^n\times\mathfrak{o}_1^n)$ given by
\[g^{-1}(\hat{v},\hat{w})=(D^{-1}\hat{v},\hat{w}D),\text{ where }g=\begin{pmatrix}1&\\&D\end{pmatrix}.\]
The orbits and stabilisers are given in Table \ref{action}, from which the representation zeta polynomial of $(\mathfrak{o}_1^n\times\mathfrak{o}_1^n)\rtimes G_{(1^n)}$ can now be deduced as in the statement of Theorem~\ref{main theorem}.
\begin{table}
\centering
\begin{tabular}{ccc}
Orbit representative $\chi$&$\Stab_{G_{(1^n)}}(\chi)$&$[G_{(1^n)}:\Stab_{G_{(1^n)}}(\chi)]$\\\hline
$\left[\begin{pmatrix}0\\\vdots\\0\end{pmatrix},\begin{pmatrix}0&\dots&0\end{pmatrix}\right]$&$G_{(1^n)}$&1\\
$\left[\begin{pmatrix}0\\\vdots\\0\end{pmatrix},\begin{pmatrix}1&0&\dots&0\end{pmatrix}\right]$&$\begin{pmatrix}1&0\\\mathfrak{o}_1^{n-1}&G_{(1^{n-1})}\end{pmatrix}\cong P_n$&$q^n-1$\\
$\left[\begin{pmatrix}1\\0\\\vdots\\0\end{pmatrix},\begin{pmatrix}0&\dots&0\end{pmatrix}\right]$&$\begin{pmatrix}1&\mathfrak{o}_1^{n-1}\\0&G_{(1^{n-1})}\end{pmatrix}\cong P_n$&$q^n-1$\\
$\left[\begin{pmatrix}1\\0\\\vdots\\0\end{pmatrix},\begin{pmatrix}0&1&0&\dots&0\end{pmatrix}\right]$&$T_n$&$(q^n-1)(q^{n-1}-1)$\\
$\left[\begin{pmatrix}1\\0\\\vdots\\0\end{pmatrix},\begin{pmatrix}u&0&\dots&0\end{pmatrix}\right];u\in\mathfrak{o}_1^\times$&$\begin{pmatrix}1&\\&G_{(1^{n-1})}\end{pmatrix}$&$q^{n-1}(q^n-1)$
\end{tabular}
\caption{The action of $G_{(1^n)}$ on $\mathfrak{o}_1^n\times\mathfrak{o}_1^n$}
\label{action}
\end{table}
\end{proof}
\begin{example}
One can check by substituting $\ell=2,n=2$ that the expression obtained for $\mathcal{R}_{G_{(2,1,1)}}(\mathcal{D})$ agrees with \cite[Lemma 7.7]{singla10}. Moreover, for $\ell=3,n=2$ one can calculate
\begin{align*}\mathcal{R}_{G_{(3,1,1)}}(\mathcal{D})&=q(q-1)^2\mathcal{D}+\frac 12 q^2(q-1)^2\mathcal{D}^{q-1}+q(q-1)^2\mathcal{D}^q\\&\quad+\frac 12 q(q-1)^2(q-2)\mathcal{D}^{q+1}+2q(q-1)^2\mathcal{D}^{(q-1)(q+1)}\\&\quad+q(q-1)^3\mathcal{D}^{q^2}+q(q-1)(q+2)\mathcal{D}^{(q-1)^2(q+1)}\\&\quad+\frac 12 q^2(q-1)^3\mathcal{D}^{q^2(q-1)}+q(q-1)^3\mathcal{D}^{q(q-1)(q+1)}\\&\quad+q(q-1)^3\mathcal{D}^{q^3}+\frac 12 q(q-1)^3(q-2)\mathcal{D}^{q^2(q+1)}.\end{align*}
\end{example}
Further, $\mathcal{R}_{G_{(2,1,1,1)}}(\mathcal{D})$ has 24 terms, with the polynomials giving the dimensions of degree at most 6. We omit the expression due to its length.

The smallest value of $n$ for which the dimensions of the irreducible representations of $G_{(\ell,1^n),\mathfrak{o}}$ are not known to be polynomial for some $\lambda$ is $n=5$. This is settled in the affirmative for the Jordan canonical forms of type: $(5)$ (abelian), $(1,1,1,1,1)$ by Green \cite{green}, $(4,1)$ and $(3,2)$ by Onn \cite{onn08}, and $(2,1,1,1)$ and $(3,1,1)$ by Theorem \ref{main theorem}. The question is still open in the case of $(2,2,1)$.
\par Acknowledgements: The author was supported by the HIMR/UKRI Additional Funding Programme for Mathematical Sciences, EP/V521917/1. The author also gratefully acknowledges the supervision of Alexander Stasinski and Jack Shotton. The author is grateful to Pooja Singla for helpful comments.
\bibliographystyle{plain}
\bibliography{reps_ell_1_n.bib}
\end{document}